\date{}
\def\ps@pprintTitle{%
	\let\@oddhead\@empty  % 清空页眉（可选，若需保留页眉可删除此句）
	\let\@evenhead\@empty  % 清空偶页页眉（可选）
	% 定义页脚：仅保留居中页码，删除Preprint和时间
	\def\@oddfoot{\reset@font\hfil\thepage\hfil}  % 奇页页脚（居中页码）
	\let\@evenfoot\@oddfoot  % 偶页页脚与奇页一致
}
\newcommand\bib@setcolor[1]{%
	\ifcsname bib@colored@#1\endcsname
	\expandafter\color\expandafter{\csname bib@colored@#1\endcsname}
	\else
	\normalcolor
	\fi
}
\newtheorem{definition}{Definition}
\newtheorem{theorem}{Theorem}
\newtheorem{Proposition}{Proposition}
\newtheorem{Assumption}{Assumption}
\newtheorem{Remark}{Remark}
\newtheorem{Problem}{Problem}
\newcommand{\norm}[1]{\left\|#1\right\|}
\newcommand{\normF}[1]{\norm{#1}_F}
\newcommand{\T}{\top}
\begin{document}

\begin{frontmatter}
%\runtitle{Insert a suggested running title}  % Running title for regular 
                                              % papers but only if the title  
                                              % is over 5 words. Running title 
                                              % is not shown in output.

\title{\thanksref{footnoteinfo}} % Title, preferably not more 
                                                % than 10 words.
%$p$th Moment Stability of Discrete-Time Markov Jump Systems by Extended System Method

\thanks[footnoteinfo]{This paper was not presented at any IFAC 
meeting.\\ $*$ Corresponding author.}

\author[Qingdao,Zibo]{Jing Guo}\ead{guojing8299@163.com},          % Add the 
\author[East]{Xiushan Jiang}$^{,*}$\ead{ x\_sjiang@163.com},	        % e-mail address 
\author[Qingdao]{Weihai Zhang}\ead{w\_hzhang@163.com}       % (ead) as shown

\address[Qingdao]{College of Electrical Engineering and Automation, Shandong University of Science and Technology, Qingdao 266590, China}                                               % Please supply                                              
\address[Zibo]{School of Mathematics and Statistics,
	Shandong University of Technology, Zibo 250049,
	China} 
\address[East]{College of New Energy, China University of Petroleum (East China), Qingdao 266580, China}                                                    % full addresses
% is not shown in output.
          
\begin{keyword}                           % Five to ten keywords,  
Convex optimization; Model-free design;  Multiplicative and additive noises; Reinforcement learning; 
Semidefinite programming.              % chosen from the IFAC 
\end{keyword}                             % keyword list or with the 
                                          % help of the Automatica 
                                          % keyword wizard

\begin{abstract}                          % Abstract of not more than 200 words.
	This paper investigates a model-free solution to the stochastic linear quadratic regulation (LQR) problem for linear  discrete-time systems with both multiplicative and additive noises. We formulate the stochastic LQR problem as a nonconvex optimization problem and rigorously analyze its dual problem structure.  By exploiting the  inherent convexity of the dual problem  and analyzing Karush-Kuhn-Tucker  conditions with respect to optimality in convex optimization, we establish an explicit relationship between the optimal point of the dual problem and the   parameters of the associated Q-function. This theoretical insight, combined with the technique of the matrix direct sum, makes it possible to develop a novel model-free sample-efficient, non-iterative semidefinite programming  algorithm that directly estimates optimal control gain without requiring  an initial stabilizing controller, or  noises measurability. The robustness of the model-free SDP method to errors  is investigated. Our approach provides a new optimization-theoretic framework for understanding Q-learning algorithms while advancing the theoretical foundations of reinforcement learning in stochastic optimal control. Numerical validation on a pulse-width modulated inverter system demonstrates the algorithm's effectiveness, particularly in achieving a single-step non-iterative solution without hyper-parameter tuning.
\end{abstract}

\end{frontmatter}

\section{Introduction} 
Optimal  control \cite{lewis2012optimal, yong2012stochastic} is an important branch of modern control theory, which aims to find the optimal control policy for dynamic system by optimising the performance index. The linear quadratic regulation (LQR) problem, which has a wide engineering background, is a typical optimal control problem, and its quadratic performance index is actually a comprehensive consideration of the transient performance, steady state performance and control energy constraints of the system in classical control theory. Such a problem can be solved by such well-established methods as   Bellman's dynamic programming \cite{bellman1966dynamic, bertsekas2019reinforcement}. With the development of convex optimization and semidefinite programming (SDP) \cite{boyd2004convex, vandenberghe1996semidefinite}, many research works  have reinvestigated the LQR problem from the perspective of convex optimization, see \cite{yao2001stochastic} and the references therein. 
The above methods for solving LQR usually require complete system information. And in fact, accurate dynamical models of most practical complex systems are difficult to obtain. To avoid the requirement of system dynamics in controller designs, reinforcement learning (RL) \cite{sutton1998reinforcement, bertsekas2019reinforcement}, a subfield of machine learning that tackles the problem of how an agent learns optimal policy to minimize the cumulative cost by interacting with   unknown environments, has been applied to the LQR problem for deterministic systems. Studies on the design of model-free controllers for LQR by means of the methods of RL, such as temporal difference \cite{sutton1998reinforcement}, Q-learning \cite{watkins1992q}  and so  on, can be found in \cite{bradtke1994adaptive, lewis2009reinforcement}.  However, most of the current RL methods are weakly extendible or lacks in theoretical assurances. Recently, many researchers have reconsidered the  LQR problem from the viewpoint of   optimization  by connecting the RL method with classical convex optimization,  for example,  according to Lagrangian duality theory,
a novel primal-dual Q-learning framework for LQR was established  and  a model-free  algorithm to solve the LQR problem   was designed  in \cite{lee2018primal},  from the  viewpoint of primal-dual optimization. In \cite{farjadnasab2022model},   a new model-free algorithm was proposed on the basis of the properties of optimization frameworks.

Due to the wide application of stochastic systems, many researchers have started to focus their attentions on applying RL methods to stochastic LQR problems. For linear discrete-time stochastic systems in the presence of additive noise,  
   \cite {pang2021robust} proposed an optimistic least squares policy iteration  algorithm.       \cite{li2022model} studied the model-free design of stochastic LQR controllers  from the perspective   of primal-dual Q-learning optimization. 
%\cite{guo2024model} proposed a model-free design scheme using SDP. 
For linear discrete-time systems with multiplicative noise,      \cite{gravell2020learning}  proposed a policy gradient algorithm. For stochastic systems with both multiplicative and additive noises,    \cite{lai2023model} employed Q-learning to implement the  model-free optimal control.  Based on an unbiased estimator and an initial stabilizing controller,
% with unknown system dynamics and unmeasurable multiplicative and additive noises, 
   \cite{jiang2024adaptive}  developed a data-driven value iteration algorithm using online data. In addition, many effective model-free RL algorithms have been proposed and convergence analyzed for stochastic optimal control problems associated with various dynamical systems, see   \cite{pang2022reinforcement, zhang2025model, jiang2024onlin} for  details.

Inspired by the above studies, this paper investigates the stochastic LQR problem for a class of discrete-time stochastic systems subject to both multiplicative and additive noises, and  a novel model-free SDP method for solving
the stochastic LQR problem  is  proposed. Compared with the existing work,  the main contributions of this paper can be summarized as follows.

1) By using the recursive relationship satisfied by the vectorized covariance matrix of the augmented states,
the stochastic LQR problem is  equivalently formulated as a nonconvex optimization problem. Instead of proving strong duality, we start directly from the dual problem of the stochastic LQR problem, and then  find out the relationship between the optimal point of the dual problem and the parameters of the Q-function by using   the convexity of the dual problem and the Karush-Kuhn-Tucker (KKT) conditions.

2)  By introducing auxiliary matrices and utilizing the technique    of the matrix direct sum, the constraint of an optimization problem containing terms related to multiplicative noise is represented in the form of linear matrix inequality  (LMI) constraint required for standard SDP. This cannot be achieved solely through the Schur complement property.

3) Based on Monte-Carlo method, a model-free implementation of the stochastic LQR controller design is given. A robust analysis of the model-free algorithm is presented by transforming errors into linear system transfer problems via dual variables. By integrating closed-loop matrix minimum singular value, the analysis unifies error boundedness and system stability, explicitly quantifying the relationship between key factors (such as sample complexity and data continuity) and errors.

The proposed model-free  SDP algorithm has the following characteristics that are worth noting:
\begin{itemize}
\item  It eliminates the need for an initial stabilizing controller -- a common limitation in existing RL methods.
\item  It does not require hyper-parameter tuning.
\item  It does not require the multiplicative and additive noises to be measurable.
\item   The algorithm implementation procedure is done in a single step, but not in an iterative form.
\item   It only needs to collect input and state information over a short length of time. 	
\end{itemize} 

%For convenience, we adopt the following notations:\par 
Notations:  $\mathcal{R}$ is  the set of real numbers and Euclidean spaces; $\mathcal{Z}_{+} $ is the set of nonnegative integers;  $\mathcal{R}^{n}$ is the set of $n$-dimensional vectors; $\mathcal{R}^{n \times m}$ is the set of $n \times m$ real matrices. $\delta_{ij}$ is the Kronecker delta function, i.e., $\delta_{ij}= 1$  when $i = j$ and $\delta_{ij}= 0$ when $i\ne  j$. $I_{n}$ denotes the identity matrix in $\mathcal{R}^{n \times n}$,  $0$ denotes the zero vector or matrix with the appropriate dimension. $\otimes$ denotes the Kronecker product.  $\|\cdot\| _{F}$ denotes the Frobenius norm; $\|\cdot\| _{2}$ denotes the Euclidean
norm for vectors and the spectral norm for matrices. $\operatorname{col}\left(x, y\right)$ denotes the column vector consisting of vectors $x$ and $y$.   The transpose of a matrix or vector $M$ is represented by $M^{\top}$; the trace of a square matrix $M$ is denoted as $\operatorname{Tr}\left( M\right)$; spectral radius of matrix $M$ is denoted as $\rho\left( M\right)$; the minimum singular value of matrix $M$ is denoted as $\sigma_{\text {min}}\left(M \right)$; the maximum and minimum eigenvalues of a real symmetric  matrix $M$ are denoted by $\lambda_{\text {max}}\left(M \right)$ and $\lambda_{\text {min}}\left(M \right)$, respectively.  For matrix $M \in \mathcal{R}^{m \times n}$, define $\operatorname{vec}(M)=\operatorname{col}\left(M_{1}, M_{2}, \cdots, M_{n}\right)$, where $M_{j}$ is the $j$th column of matrix $M$.  $\operatorname{vec}^{-1}(\cdot) $ is an  operation such that $M = \operatorname{vec}^{-1}(\operatorname{vec}(M))$. The direct sum of matrices $M_{1}$ and $ M_{2}$ is defined as $M_{1}\oplus M_{2} \triangleq\begin{bmatrix}M_{1}& 0\\
0&M_{2}
\end{bmatrix} $.  $\mathbb{R}\left(M\right)$ denotes the range space
of $M$;  $\mathbb{N}\left(M\right)$ denotes the  kernel space
of $M$. $\mathcal{S}^{n}$, $\mathcal{S}^{n}_{+}$ and $\mathcal{S}^{n}_{++}$ are the sets of all $n \times n$ symmetric, symmetric positive semidefinite and symmetric positive definite matrices, respectively; we write  $M \succeq 0$, (resp. $M \succ 0$) if $M \in \mathcal{S}^{n}_{+}$, (resp. $M \in \mathcal{S}^{n}_{++}$). For $H\in \mathcal{S}^{n}$, define $\operatorname{vecs}( H)\triangleq\left[h_{11}, \sqrt{2} h_{12}, \ldots, \sqrt{2} h_{1 n}, h_{22}, \sqrt{2} h_{23}, \ldots, \sqrt{2} h_{(n-1)n},\right.$
$\left.  h_{n n}\right]^{\top}$, where $h_{ik}$ is the $(i, k)$th element of matrix $H$. 
\section{Problem formulation and preliminaries}
Consider the following
general linear discrete-time stochastic system with both multiplicative and additive noises  
\begin{equation}\label{eq1}
x_{k+1}=A x_{k}+B u_{k}+\left( A_{1} x_{k}+B_{1} u_{k}\right) v_{k}+w_{k},
\end{equation}
where  $k \in \mathcal{Z}_{+}$ is the discrete-time index,  $x_{k} \in \mathcal{R}^{n}$,  $u_{k} \in \mathcal{R}^{m}$, $v_{k} \in \mathcal{R}^{1}$ and $w_{k} \in \mathcal{R}^{n}$  are  the system state, control input, system multiplicative noise and  system additive noise,  respectively. $A, A_{1} \in \mathcal{R}^{n \times n}$ and $B, B_{1} \in \mathcal{R}^{n \times m}$ are the system coefficient matrices.
The
system noise sequence $\left\{\left( w_{k},v_{k}\right)\,, k \in \mathcal{Z}_{+} \right\}$ is
defined on a
complete probability space $\left(\Omega, \mathscr{F},\mathcal{P}\right)$.  The initial state $x_{0}$ is assumed to be a random variable  with   mean vector  $\mu_{0}$ and positive definite covariance matrix $\Sigma_{0}$. For convenience, it
is further assumed that the following Assumption 1 holds.
\begin{Assumption}\rm  \label{A1}  
	Assume that
	\begin{itemize} 
		\item[(\romannumeral1)] $\left\{\left( w_{k},v_{k}\right)\,, k \in \mathcal{Z}_{+} \right\}$ is independent of $x_{0}$;	 
		\item[(\romannumeral2)]  The sequence of random vectors  $\left\{w_{k}, k \in \mathcal{Z}_{+} \right\}$ is independent and identically distributed (i.i.d.) with mean zero and  covariance matrix 	$\Sigma$, where $\Sigma $ is bounded and positive definite;	
		\item[(\romannumeral3)]  The sequence of random variables  $\left\{v_{k}, k \in \mathcal{Z}_{+} \right\}$ is i.i.d. with mean zero and  variance $\sigma$,   where $0<\sigma <\bar{\sigma}$;
		\item[(\romannumeral4)]	$\left\{w_{k}, k \in \mathcal{Z}_{+} \right\}$ and $\left\{v_{k}, k \in \mathcal{Z}_{+} \right\}$ are mutually independent.
	\end{itemize}	 
\end{Assumption}

\begin{definition}\rm \cite{kubrusly1985mean}
	System~\eqref{eq1} with $u_{k} \equiv 0$ is
	called mean square stable (MSS) if  for
	any initial state $x_{0}$ and system noise sequence $\left\{\left( w_{k},v_{k}\right) , k \in \mathcal{Z}_{+} \right\}$
	satisfying Assumption~\ref{A1}, there exist $ \mu \in \mathcal{R}^{n}$ and $X \in \mathcal{R}^{n \times n}$ which are independent of $x_{0}$,
	such that 
	\begin{itemize} 
		\item[(\romannumeral1)]$\lim _{\substack{{k \rightarrow \infty}}} \|\mathbb{E}\left( x_{k}\right) -\mu\| _{2}=0\,,$
		\item[(\romannumeral2)]$\lim _{\substack{{k \rightarrow \infty}}} \|\mathbb{E}\left( x_{k}x_{k}^{\top}\right) -X\| _{2}=0\,.$
	\end{itemize}	
\end{definition}

\begin{definition}\rm  
	System~\eqref{eq1} with the feedback control policy $u_{k}=Lx_{k}$ is called    stabilizable, if the closed-loop system $	x_{k+1}=\left(A+BL\right) x_{k}+\left(A_{1}+B_{1}L\right) x_{k}v_{k}+w_{k}$ is MSS.  In this case,  the feedback control policy $u_{k}=Lx_{k}$ is called to be admissible, $L \in \mathcal{R}^{m \times n} $ is called a (mean square) stabilization gain of system~\eqref{eq1}, and the corresponding set of all (mean square) stabilizing state feedback gains is denoted as $\mathcal{L}$.  		
\end{definition}

Consequently, the cost functional can be
defined as 
\begin{equation}\label{j1}
J(L,x_{0})\triangleq\sum_{k=0}^{\infty}\alpha ^{k} \mathbb{E}\left[\begin{array}{l}
x_{k} \\
Lx_{k}
\end{array}\right]^{\top}W\left[\begin{array}{l}
x_{k} \\
Lx_{k}
\end{array}\right]\,,
\end{equation}
where $\alpha  \in (0,1) $ is the discount factor and $W \triangleq Q\oplus R $ is a block-diagonal matrix   
including the state and input weighting matrices $Q\in \mathcal{S}^{n}_{+}$ and
$R \in \mathcal{S}^{m}_{++}$, respectively. 

In this paper, we consider the infinite-horizon stochastic LQR problem. 
\begin{Problem}\rm ( Stochastic LQR Problem )\label{p1}\par	
	Find an optimal feedback gain $L^{*} \in \mathcal{L}$, if it exists,  that  minimizes the cost functional \eqref{j1}. That is, 
	\begin{equation*}
	L^{*} \triangleq
	\underset{L \in \mathcal{L}}{\arg \min }\ J(L,x_{0})\,.
	\end{equation*}  		
\end{Problem} 

The following assumptions are necessary to ensure that an optimal feedback gain $L^{*}$ exists.
\begin{Assumption}\rm \label{A2}
Assume that
	\begin{itemize} 
		\item[(\romannumeral1)] System~\eqref{eq1} is  (mean square) stabilizable. 
		\item[(\romannumeral2)]  $(A, A_{1}|\sqrt{Q})$ %and $(A +BL^{*}, A_{1}+B_{1}L^{*} |\sqrt{Q})$ are 
		is exactly detectable   \cite{zhang2017stochastic}.	
	\end{itemize}
\end{Assumption}
Under Assumption~\ref{A2},    the optimal value of $\underset{L \in \mathcal{L}}{ \inf }\ J(L,x_{0})$ exists and is attained, and the corresponding optimal cost $J\left(L^{*},x_{0}\right) $, which is  abbreviated as $ J^{*}$, is given by \cite{lai2023model} 	
\begin{equation*}
J^{*}=\mathbb{E}\left( x_{0}^{\top} P^{*} x_{0} \right) +\frac{\alpha}{1-\alpha}\operatorname{Tr}\left(P^{*} \Sigma \right)\,,
\end{equation*}
where $P^{*}\succeq 0$ is the unique solution to the following discrete-time generalized  algebraic Riccati equation (DGARE):
\begin{equation}\label{are1}
P= \mathscr{R}(P)\,,
\end{equation}  
where 
\begin{equation*}
\begin{aligned}
&\mathscr{R}(P)\\
\triangleq &Q+\alpha  A^{\top} PA+\alpha \sigma A_{1}^{\top} PA_{1}-\alpha ^{2}	\left( A^{\top} P B+\sigma A_{1}^{\top} P B_{1}\right) \notag \\ 
&\left( 
R+\alpha  B^{\top} PB+\alpha \sigma B_{1}^{\top} PB_{1}\right) ^{-1}	\left( B^{\top} P A+\sigma B_{1}^{\top} P A_{1}\right)\,.\notag
\end{aligned} 
\end{equation*}
The  corresponding optimal control gain $L^{*}$ for the stochastic LQR problem is given by
\begin{equation*}
\begin{aligned}
L^{*}=&-\alpha  \left( 
R+\alpha  B^{\top} P^{*}B+\alpha \sigma B_{1}^{\top} P^{*}B_{1}\right) ^{-1}\\
&\times	\left( B^{\top} P^{*} A+\sigma B_{1}^{\top} P^{*} A_{1}\right)\,.
\end{aligned}
\end{equation*}

It is clear from the above that the stochastic LQR
problem can be solved by using the knowledge of the system
dynamics. The following Q-learning offers a model-free solution for solving the stochastic LQR
problem. The Q-function is defined as (see \cite{lai2023model} for more details),
\begin{equation*}
Q\left(x_{k},u_{k}\right) \triangleq \mathbb{E}\left(U(x_{k}, u_{k})\right) +\alpha		V\left(L,x_{k+1}\right)\,,
\end{equation*}
where the value function
\begin{equation*}
V\left(L,x_{k}\right) \triangleq \mathbb{E}\sum_{i=k}^{\infty}\alpha^{i-k} U (x_{i}, Lx_{i}),
\end{equation*}
with $	U(x_{i}, u_{i})\triangleq x_{i}^{\top} Q  x_{i}+u_{i}^{\top} R  u_{i}\,.$

Denote $V^{*}(x_{k})$ as the optimal value function
generated by the optimal admissible control policy. That is,
\begin{equation*}
V^{*}\left(x_{k}\right)\triangleq V\left(L^{*},x_{k}\right)=\underset{L\in \mathcal{L} }{\operatorname{Minimize}} \  V\left(L,x_{k}\right)\,.
\end{equation*}
For the case of stochastic LQR, we have
\begin{lem}\rm \label{lem3} \cite{lai2023model}
	The optimal
	Q-function 	$Q^{*}\left(x_{k},u_{k}\right) \triangleq \mathbb{E}\left(U(x_{k}, u_{k})\right) +\alpha	V^{*}\left(x_{k+1}\right) $ can be expressed as 
	\begin{align}\label{H}
	Q^{*}\left(x_{k},u_{k}\right)= &\mathbb{E}\left[  \left[ x_{k}^{\top},u_{k}^{\top}\right] H^{*}\left[ x_{k}^{\top},u_{k}^{\top}\right]^{\top}\right] \\
	& +\frac{\alpha}{1-\alpha}\operatorname{Tr}\left(P^{*} \Sigma \right)\,,\notag
	\end{align}
	where $P^{*}\succeq 0$ is the unique solution to DGARE~\eqref{are1}, 
	$	H^{*}=\begin{bmatrix}
	H^{*}_{11}&H^{*}_{12}\\
	H^{*}_{21}&H^{*}_{22}
	\end{bmatrix}$
	%=\left[\begin{array}{cc}	Q+r A^{\top} P^{*} A +\alpha\sigma A_{1}^{\top} P^{*}A_{1}& r A^{\top} P^{*} B+\alpha\sigma A_{1}^{\top} P^{*}B_{1} \\	r B^{\top} P^{*} A +\alpha\sigma B_{1}^{\top} P^{*}A_{1}& R+r B^{\top} P^{*} B+\alpha\sigma B_{1}^{\top} P^{*}B_{1}\end{array}\right]\,,
		with $H^{*}_{11}=Q+\alpha  A^{\top} P^{*}A +\alpha \sigma A_{1}^{\top} P^{*}A_{1}\,, H^{*}_{12}=\alpha  A^{\top} P^{*} B+\alpha \sigma A_{1}^{\top} P^{*}B_{1}=\left( H^{*}_{21}\right) ^{\top}$,  $H^{*}_{22}=R+\alpha  B^{\top} P^{*} B+\alpha \sigma B_{1}^{\top} P^{*}B_{1}$.
	
	%%\begin{equation}\label{H}
	%%H=\left[\begin{array}{cc}	Q+r A^{\top} P A & r A^{\top} P B \\	r B^{\top} P A & R+r B^{\top} P %%B\end{array}\right].	
	%%\end{equation}
	Furthermore, the optimal control policy is presented by
	\begin{equation*}
	u^{*}_{k}  =L^{*}x_{k}=\underset{u_{k}}{\arg \min }\ Q^{*}(x_{k}, u_{k}) 
	\end{equation*}
	with the optimal state feedback gain
	\begin{equation} \label{lstar}
	L^{*}=-\left( H^{*}_{22}\right) ^{-1} \left( H^{*}_{12}\right) ^{\top}\,.	
	\end{equation}
\end{lem}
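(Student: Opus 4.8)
The plan is to verify the claimed closed form by dynamic programming, taking as the one substantive input the structure of the optimal value function that already underlies the DGARE~\eqref{are1} and the optimal cost $J^{*}$ stated above. The starting point is that, for this class of problems, the optimal value function is quadratic in the state with a noise-induced constant offset,
\begin{equation*}
V^{*}(x_{k}) = x_{k}^{\top} P^{*} x_{k} + \frac{\alpha}{1-\alpha}\operatorname{Tr}(P^{*}\Sigma),
\end{equation*}
where $P^{*}\succeq 0$ solves~\eqref{are1}. This is exactly the per-state version of the optimal cost (it returns $J^{*}$ upon averaging over $x_{0}$), and I would invoke it as the established stochastic LQR result under Assumption~\ref{A2} and \cite{lai2023model}. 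Justifying this ansatz rigorously --- that $V^{*}$ is finite, quadratic-plus-constant, and that the associated $P^{*}$ satisfies the DGARE --- is the main obstacle; everything afterward is algebra.

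Next I would substitute the dynamics~\eqref{eq1} into $\alpha V^{*}(x_{k+1})$ and take the conditional expectation given $(x_{k},u_{k})$. Writing $x_{k+1}=(Ax_{k}+Bu_{k})+(A_{1}x_{k}+B_{1}u_{k})v_{k}+w_{k}$ and using Assumption~\ref{A1} --- $\mathbb{E}v_{k}=0$, $\mathbb{E}v_{k}^{2}=\sigma$, $\mathbb{E}w_{k}=0$, $\mathbb{E}(w_{k}w_{k}^{\top})=\Sigma$, and mutual independence --- all cross terms vanish, leaving
\begin{align*}
\mathbb{E}\bigl[x_{k+1}^{\top} P^{*} x_{k+1}\mid x_{k},u_{k}\bigr] =\;& (Ax_{k}+Bu_{k})^{\top} P^{*} (Ax_{k}+Bu_{k}) \\
&+ \sigma (A_{1}x_{k}+B_{1}u_{k})^{\top} P^{*} (A_{1}x_{k}+B_{1}u_{k}) \\
&+ \operatorname{Tr}(P^{*}\Sigma).
\end{align*}
Inserting this into $Q^{*}(x_{k},u_{k})=\mathbb{E}(U(x_{k},u_{k}))+\alpha V^{*}(x_{k+1})$, the constants combine as $\alpha\operatorname{Tr}(P^{*}\Sigma)+\frac{\alpha^{2}}{1-\alpha}\operatorname{Tr}(P^{*}\Sigma)=\frac{\alpha}{1-\alpha}\operatorname{Tr}(P^{*}\Sigma)$, reproducing the offset in~\eqref{H}. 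For the quadratic part I would set $z_{k}=\operatorname{col}(x_{k},u_{k})$ and write each term as a quadratic form in $z_{k}$: the running cost is $z_{k}^{\top}(Q\oplus R)z_{k}$, while the two propagated terms are $z_{k}^{\top}\begin{bmatrix}A&B\end{bmatrix}^{\top} P^{*}\begin{bmatrix}A&B\end{bmatrix}z_{k}$ and $\sigma\, z_{k}^{\top}\begin{bmatrix}A_{1}&B_{1}\end{bmatrix}^{\top} P^{*}\begin{bmatrix}A_{1}&B_{1}\end{bmatrix}z_{k}$. Summing and retaining the outer expectation over $(x_{k},u_{k})$, the blocks read off as $H^{*}_{11}=Q+\alpha A^{\top} P^{*} A+\alpha\sigma A_{1}^{\top} P^{*} A_{1}$, $H^{*}_{12}=\alpha A^{\top} P^{*} B+\alpha\sigma A_{1}^{\top} P^{*} B_{1}$, and $H^{*}_{22}=R+\alpha B^{\top} P^{*} B+\alpha\sigma B_{1}^{\top} P^{*} B_{1}$, which is precisely the asserted $H^{*}$.

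Finally, for the control law I would observe that $H^{*}_{22}\succ 0$ since $R\succ 0$ and the remaining terms are positive semidefinite, so $Q^{*}(x_{k},\cdot)$ is strictly convex in $u_{k}$. Setting the gradient $\partial Q^{*}/\partial u_{k}=2(H^{*}_{12})^{\top}x_{k}+2H^{*}_{22}u_{k}$ to zero gives $u_{k}^{*}=-(H^{*}_{22})^{-1}(H^{*}_{12})^{\top}x_{k}$, i.e. $L^{*}=-(H^{*}_{22})^{-1}(H^{*}_{12})^{\top}$ as in~\eqref{lstar}. As a consistency check, substituting the block expressions recovers the $P^{*}$-based gain stated above, and~\eqref{are1} is recognized as the Schur-complement identity $P^{*}=H^{*}_{11}-H^{*}_{12}(H^{*}_{22})^{-1}(H^{*}_{12})^{\top}$, so minimizing $Q^{*}$ indeed returns the value $x_{k}^{\top} P^{*} x_{k}$ and closes the loop. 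To summarize, the value-function ansatz and the DGARE supply the only nontrivial content, while the remaining steps are routine block-matrix bookkeeping.
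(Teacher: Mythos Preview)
Your argument is correct and is precisely the standard dynamic-programming derivation one would expect. Note, however, that the paper does not actually prove Lemma~\ref{lem3}: it is quoted verbatim from \cite{lai2023model} and used as a black box. So there is no ``paper's own proof'' to compare against; your computation is exactly the kind of verification the cited reference carries out, and nothing in it is in tension with the paper.

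One small remark on presentation: the paper treats $x_{k}$ and $u_{k}$ as random (hence the outer $\mathbb{E}[\cdot]$ in~\eqref{H}), so when you write the conditional expectation $\mathbb{E}[\,\cdot\mid x_{k},u_{k}]$ you are implicitly taking a further expectation over $(x_{k},u_{k})$ at the end; you do acknowledge this, but it is worth making explicit that the ``conditional'' calculation is really computing the inner expectation over $(v_{k},w_{k})$, after which the outer expectation over $(x_{k},u_{k})$ is just carried along. With that clarified, your value-function ansatz, the cross-term cancellations via Assumption~\ref{A1}, the telescoping of the constant $\alpha\operatorname{Tr}(P^{*}\Sigma)+\frac{\alpha^{2}}{1-\alpha}\operatorname{Tr}(P^{*}\Sigma)=\frac{\alpha}{1-\alpha}\operatorname{Tr}(P^{*}\Sigma)$, the block identification of $H^{*}$, and the strict convexity in $u_{k}$ via $R\succ 0$ are all in order.
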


The following lemmas give  necessary and sufficient conditions for the feedback control policy to be admissible, which will be
used extensively in this paper.
\begin{lem}\rm \label{lem1}\cite{lai2023model} 
	
	The following statements are equivalent:
	\begin{itemize} 
		\item[(\romannumeral1)] The feedback control policy $u_{k}=Lx_{k}$ is  admissible for system~\eqref{eq1};
		%: $x_{k+1}=A x_{k}+B u_{k}+\left( A_{1} x_{k}+B_{1} u_{k}\right) v_{k}$;
		\item[(\romannumeral2)] $\rho\left( C_{L}\right)<1 $, where $C_{L}\triangleq \left( A+BL\right)  \otimes \left( A+BL\right)+\sigma \left( A_{1}+B_{1}L\right) \otimes \left( A_{1}+B_{1}L\right)$;
		\item[(\romannumeral3)]  for each given $Y \in \mathcal{S}^{n}_{++}$, there exists a unique
		$G \in \mathcal{S}^{n}_{++}$, such that
		$\left( A+BL\right)^{\top}G\left( A+BL\right)+\sigma\left( A_{1}+B_{1}L\right)^{\top}G\left( A_{1}+B_{1}L\right)+Y=G$;
		
		\item[(\romannumeral4)] $\rho\left( \overline{C_{L}}\right)<1 $, where $\overline{C_{L}}\triangleq \overline{A_{L}} \otimes \overline{A_{L}}+\sigma \overline{A^1_{L}} \otimes \overline{A^1_{L}}$ with $\overline{A_{L}}\triangleq \left[\begin{array}{cc}A & B \\ L A & L B\end{array}\right] $ and $\overline{A^1_{L}} \triangleq \left[\begin{array}{cc}A_{1} & B_{1} \\ L A_{1} & L B_{1}\end{array}\right]$;
		\item[(\romannumeral5)] for each given $Z\in \mathcal{S}^{n+m}_{++}$, there exists a unique
		$S\in \mathcal{S}^{n+m}_{++}$, such that	$ \overline{A_{L}} S \overline{A_{L}}^{\top}+\sigma\overline{A^1_{L}} S \overline{A^1_{L}} ^{\top} +Z=S$.		  
	\end{itemize}	
\end{lem}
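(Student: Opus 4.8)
The plan is to treat condition~(ii) as the hub and establish (i)$\Leftrightarrow$(ii), (ii)$\Leftrightarrow$(iii), (ii)$\Leftrightarrow$(iv) and (iv)$\Leftrightarrow$(v). Writing $\Phi:=A+BL$ and $\Phi_{1}:=A_{1}+B_{1}L$, the unifying object is the map $\mathcal{L}(X):=\Phi X\Phi^{\top}+\sigma\Phi_{1}X\Phi_{1}^{\top}$, which is a positive (PSD-cone-preserving) linear operator on $\mathcal{S}^{n}$ whose matrix representation under $\operatorname{vec}$ is exactly $C_{L}$, so that $\rho(\mathcal{L})=\rho(C_{L})$ and Perron--Frobenius theory for cone-preserving maps applies.

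For (i)$\Leftrightarrow$(ii) I would start from the closed-loop recursion $x_{k+1}=\Phi x_{k}+\Phi_{1}x_{k}v_{k}+w_{k}$ and propagate moments. By Assumption~\ref{A1} (the independence of $v_{k},w_{k}$ from $x_{k}$, their zero means, $\operatorname{var}(v_{k})=\sigma$, $\operatorname{cov}(w_{k})=\Sigma$) the cross terms vanish, so $X_{k}:=\mathbb{E}(x_{k}x_{k}^{\top})$ obeys $X_{k+1}=\mathcal{L}(X_{k})+\Sigma$ and $\mathbb{E}(x_{k+1})=\Phi\,\mathbb{E}(x_{k})$. Applying $\operatorname{vec}$ with $\operatorname{vec}(MXN^{\top})=(N\otimes M)\operatorname{vec}(X)$ turns the second-moment equation into the affine system $\operatorname{vec}(X_{k+1})=C_{L}\operatorname{vec}(X_{k})+\operatorname{vec}(\Sigma)$. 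Since the MSS limit must be independent of $x_{0}$, the difference of two trajectories is $C_{L}^{k}$ applied to an arbitrary symmetric increment, so convergence for every $x_{0}$ is equivalent to $C_{L}^{k}\to0$, i.e. $\rho(C_{L})<1$; conversely $\rho(C_{L})<1$ makes the affine recursion converge to its unique fixed point. The first-moment limit then comes for free: because $\mathcal{L}$ dominates $X\mapsto\Phi X\Phi^{\top}$ on the PSD cone, monotonicity of the spectral radius for positive operators gives $\rho(\Phi)^{2}=\rho(\Phi\otimes\Phi)\le\rho(C_{L})<1$, hence $\mathbb{E}(x_{k})\to0$ as well.

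For (ii)$\Leftrightarrow$(iii) I would invoke the Stein/Lyapunov characterization for positive operators. The equation in~(iii) reads $\mathcal{L}^{*}(G)+Y=G$, where $\mathcal{L}^{*}(G)=\Phi^{\top}G\Phi+\sigma\Phi_{1}^{\top}G\Phi_{1}$ is the adjoint of $\mathcal{L}$ and is represented by $C_{L}^{\top}$, so $\rho(\mathcal{L}^{*})=\rho(C_{L})$. If $\rho(C_{L})<1$ then $I-C_{L}^{\top}$ is invertible and $G=\sum_{j\ge0}(\mathcal{L}^{*})^{j}(Y)\succeq Y\succ0$ is the unique solution. For the converse I would run a Lyapunov argument on the homogeneous flow $X_{k+1}=\mathcal{L}(X_{k})$: with $G,Y\succ0$ solving~(iii), $\operatorname{Tr}(GX_{k+1})=\operatorname{Tr}(\mathcal{L}^{*}(G)X_{k})=\operatorname{Tr}(GX_{k})-\operatorname{Tr}(YX_{k})$ is strictly decreasing while $X_{k}\ne0$, which forces $\mathcal{L}^{k}\to0$ and hence $\rho(C_{L})<1$. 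I expect the main obstacle to sit precisely here: cleanly arguing both positive definiteness and uniqueness of the solution, and making the strict-decrease/cone argument rigorous for the converse, which is exactly where the positivity of $\mathcal{L}$ (Perron--Frobenius, or the M-matrix structure of $I-C_{L}$) is indispensable.

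Finally, for (ii)$\Leftrightarrow$(iv)$\Leftrightarrow$(v) I would exploit the factorizations $\overline{A_{L}}=T\,[\,A\ \ B\,]$ and $\overline{A^1_{L}}=T\,[\,A_{1}\ \ B_{1}\,]$ with $T:=\operatorname{col}(I_{n},L)$, noting that the augmented state is $\operatorname{col}(x_{k},Lx_{k})=Tx_{k}$. The augmented operator $\overline{\mathcal{L}}(S):=\overline{A_{L}}S\overline{A_{L}}^{\top}+\sigma\overline{A^1_{L}}S(\overline{A^1_{L}})^{\top}$ then satisfies $\overline{\mathcal{L}}(S)=T\big([\,A\ \ B\,]S[\,A\ \ B\,]^{\top}+\sigma[\,A_{1}\ \ B_{1}\,]S[\,A_{1}\ \ B_{1}\,]^{\top}\big)T^{\top}$, so its range lies in the invariant subspace $\{TXT^{\top}:X\in\mathcal{S}^{n}\}$, on which (using $[\,A\ \ B\,]T=\Phi$ and $[\,A_{1}\ \ B_{1}\,]T=\Phi_{1}$) it acts as $TXT^{\top}\mapsto T\mathcal{L}(X)T^{\top}$. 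Every nonzero eigenvalue of $\overline{\mathcal{L}}$ must lie in this subspace and hence be an eigenvalue of $\mathcal{L}$, and conversely, so $\rho(\overline{C_{L}})=\rho(C_{L})$, giving (ii)$\Leftrightarrow$(iv). Then (iv)$\Leftrightarrow$(v) is the same positive-operator Stein characterization used for (ii)$\Leftrightarrow$(iii), now applied to $\overline{\mathcal{L}}$ in its primal form $\overline{\mathcal{L}}(S)+Z=S$ (using $\rho(\overline{\mathcal{L}})=\rho(\overline{\mathcal{L}}^{*})$), which closes the chain of equivalences.
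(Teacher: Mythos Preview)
The paper does not prove Lemma~\ref{lem1}; it is stated as a citation of \cite{lai2023model} and used as a black box throughout. There is therefore no ``paper's own proof'' to compare against.

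Your plan is sound and essentially the standard route for this class of results. The key ingredients are all present: the vectorized second-moment recursion $\operatorname{vec}(X_{k+1})=C_L\operatorname{vec}(X_k)+\operatorname{vec}(\Sigma)$ for (i)$\Leftrightarrow$(ii); the Stein--Lyapunov characterization of $\rho(\mathcal{L}^*)<1$ for (ii)$\Leftrightarrow$(iii); the rank-deficient factorization $\overline{A_L}=T[A\ B]$, $\overline{A^1_L}=T[A_1\ B_1]$ showing $\overline{\mathcal L}$ and $\mathcal L$ share nonzero spectrum for (ii)$\Leftrightarrow$(iv); and the repetition of the Stein argument for (iv)$\Leftrightarrow$(v). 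Two small points worth tightening when you write it out: in (i)$\Rightarrow$(ii), your ``arbitrary symmetric increment'' claim needs the observation that any symmetric matrix is a difference of two admissible initial second-moment matrices $X_0\succ 0$, which is easy but should be said; and in the converse (iii)$\Rightarrow$(ii), the strict-decrease argument becomes clean once you note $\operatorname{Tr}(YX_k)\ge(\lambda_{\min}(Y)/\lambda_{\max}(G))\operatorname{Tr}(GX_k)$, which gives geometric decay of $\operatorname{Tr}(GX_k)$ and hence of $\|X_k\|$, yielding $\rho(C_L)<1$ directly rather than via a limiting argument.
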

\begin{lem}\rm \label{lem2}\cite{li2022model}	
	Let $M\in \mathcal{R}^{n \times n}$. Then, when  $\alpha >1-\frac{\lambda_{\min }(X)}{\lambda_{\max }\left( M^{\top}YM\right) }$,   $\rho\left(M\right)<1$ if and only if for each given $X \in \mathcal{S}^{n}_{++}$, there exists a unique
	$ Y\in \mathcal{S}^{n}_{++}$, such that $\alpha M^{\top}YM+X=Y$.	
\end{lem}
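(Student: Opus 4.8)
The plan is to absorb the discount factor into the matrix and thereby reduce the discounted equation $\alpha M^{\top}YM+X=Y$ to a classical (undiscounted) discrete Lyapunov equation. Setting $N\triangleq\sqrt{\alpha}\,M$, one has $N^{\top}YN=\alpha M^{\top}YM$, so the equation becomes $N^{\top}YN+X=Y$, i.e.\ $Y-N^{\top}YN=X$. To this I would apply the classical discrete-time Lyapunov theorem: $\rho(N)<1$ if and only if for each $X\succ 0$ there is a unique $Y\succ 0$ solving $Y-N^{\top}YN=X$, with the closed form $Y=\sum_{k=0}^{\infty}(N^{\top})^{k}XN^{k}=\sum_{k=0}^{\infty}\alpha^{k}(M^{\top})^{k}XM^{k}$.

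For the forward implication, suppose $\rho(M)<1$. Since $\alpha\in(0,1)$, we get $\rho(N)=\sqrt{\alpha}\,\rho(M)<\rho(M)<1$, so the classical theorem immediately furnishes a unique $Y\in\mathcal{S}^{n}_{++}$ for every $X\in\mathcal{S}^{n}_{++}$, and the series above converges and is positive definite. Note that the hypothesis on $\alpha$ is not actually needed for this direction; $\rho(M)<1$ alone secures existence and uniqueness.

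For the reverse implication the hypothesis on $\alpha$ becomes essential, and I expect this to be the crux. The difficulty is that the mere existence of a positive-definite solution to $Y-\alpha M^{\top}YM=X\succ 0$ only forces $\rho(N)=\sqrt{\alpha}\,\rho(M)<1$, i.e.\ $\rho(M)<1/\sqrt{\alpha}$, which is strictly weaker than $\rho(M)<1$ because $\alpha<1$. To close this gap I would rewrite
\[
Y-M^{\top}YM=\left(Y-\alpha M^{\top}YM\right)-(1-\alpha)M^{\top}YM=X-(1-\alpha)M^{\top}YM.
\]
For any unit vector $z$, $z^{\top}\!\left(Y-M^{\top}YM\right)z\ge \lambda_{\min}(X)-(1-\alpha)\lambda_{\max}(M^{\top}YM)$, and this lower bound is strictly positive exactly when $\alpha>1-\lambda_{\min}(X)/\lambda_{\max}(M^{\top}YM)$, which is precisely the stated hypothesis. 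Hence $Y-M^{\top}YM\succ 0$ with $Y\succ 0$, and the converse part of the classical discrete Lyapunov theorem (applied to $M$ itself, not $N$) yields $\rho(M)<1$.

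The main obstacle is interpreting the seemingly awkward condition on $\alpha$, which references the very solution $Y$ it is used to analyze. The point I would emphasize is that this condition is not an artifact but the exact threshold at which the discounted Lyapunov certificate $Y$ \emph{simultaneously} certifies the undiscounted strict inequality $Y\succ M^{\top}YM$, thereby upgrading the weak bound $\rho(M)<1/\sqrt{\alpha}$ to the desired $\rho(M)<1$. Everything else is a direct appeal to the classical Lyapunov theorem; the only care required is to invoke it in both directions and to track the scaling $N=\sqrt{\alpha}\,M$ consistently.
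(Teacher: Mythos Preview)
Your argument is correct. The paper itself does not prove this lemma; it is quoted with a citation to \cite{li2022model} and used as a black box, so there is no in-paper proof to compare against. Your reduction via $N=\sqrt{\alpha}\,M$ to the classical discrete Lyapunov theorem, together with the identity $Y-M^{\top}YM=X-(1-\alpha)M^{\top}YM$ to upgrade $\rho(M)<1/\sqrt{\alpha}$ to $\rho(M)<1$ under the stated threshold on $\alpha$, is exactly the right mechanism and matches how the lemma is deployed later in the paper (e.g.\ in the proofs of Proposition~1 and Theorem~2, where the same $\alpha$-threshold appears).
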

%In view of Lemma~\ref{lem2}, we assume  that 	$\alpha \in (0,1) $ is close to 1 throughout the paper.

\section{ Model-based  stochastic LQR  via SDP} 
The stochastic LQR problem and the Lagrange dual problem associated with the stochastic LQR problem  are first  formulated as a nonconvex optimization problem and an SDP problem, respectively. Then using the convexity of the dual problem and the KKT conditions, the relationship between the optimal point of the dual problem and the parameters of the Q-function is found. Finally, the above relationship is employed to present a novel non-iterative model-based SDP algorithm for estimating the optimal control gain.  
\subsection{Problem reformulation} 
By introducing the augmented system, which is described by 
\begin{equation*}
z_{k+1}=\overline{A_{L}} z_{k}+\overline{A^1_{L}} z_{k}v_{k}+\overline{L} w_{k}\,,
\end{equation*}
where the augmented state vector is defined as $z_{k} \triangleq \operatorname{col}\left( x_{k},u_{k}\right) $, with the initial augmented state vector $z_{0} \triangleq \operatorname{col}\left(x_{0},Lx_{0}\right) $, %$\overline{A_{L}}\triangleq \left[\begin{array}{cc}A & B \\ L A & L B\end{array}\right] \in \mathcal{R}^{(n+m) \times(n+m)} $, $\overline{A^1_{L}} \triangleq \left[\begin{array}{cc}A_{1} & B_{1} \\ L A_{1} & L B_{1}\end{array}\right] \in \mathcal{R}^{(n+m) \times(n+m)} $, 
and $\overline{L} \triangleq \left[\begin{array}{c}I_{n} \\L\end{array}\right] \in \mathcal{R}^{(n+m) \times n} $,	 
we can derive an equivalent optimization reformulation of Problem~\ref{p1}  (stochastic LQR problem)  as follows.
\begin{Problem}\rm \label{p2} (Primal Problem)
	Nonconvex  optimization with optimization variables $L\in \mathcal{R}^{m \times n}$ and $S \in \mathcal{S}^{n+m}.$ 
	\begin{align}
	J_p \triangleq  \ & \underset{L\in \mathcal{R}^{m \times n},\ S \in \mathcal{S}^{n+m} }{\operatorname{Minimize}} \ \operatorname{Tr}(W S)\,, \label{eqp3}\\
	&\text { subject to } \  S \succ 0\,,\label{eqp31}\\
	&\alpha  \left( \overline{A_{L}} S \overline{A_{L}}^{\top}+\sigma\overline{A^1_{L}} S \overline{A^1_{L}} ^{\top}\right) +\overline{L} X_{0} \overline{L}^{\top} \notag \\
	&+\frac{\alpha}{1-\alpha} \overline{L}\Sigma \overline{L}^{\top}=S \,, \label{eqp32}	
	\end{align}	
	 where  $ X_{0} \triangleq \mathbb{E}\left[x_{0} x_{0}^{\top}\right]$.
\end{Problem}

Note that Problem~\ref{p2} is nonconvex since  the equality constraint \eqref{eqp32} is not linear. 
\begin{Proposition}\rm
	Problem~\ref{p2} is
	equivalent to Problem~\ref{p1} in the sense that $J_p =J^{*}$ and $L_p=L^{*}$,  where  $(S_p, L_p)$ is the optimal point of Problem~\ref{p2}. In addition,	 $(S_p, L_p)$ is unique.	
\end{Proposition}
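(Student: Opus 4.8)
The plan is to establish a cost-preserving correspondence between the admissible gains of Problem~\ref{p1} and the feasible pairs of Problem~\ref{p2}, and then to match optimizers and values. The central object is the discounted second-moment matrix of the augmented state associated with a gain $L$, namely $S(L)\triangleq\sum_{k=0}^{\infty}\alpha^{k}\mathbb{E}[z_{k}z_{k}^{\top}]$. Since $z_{k}=\overline{L}x_{k}$ and $W=Q\oplus R$, one has $\mathbb{E}[z_{k}^{\top}Wz_{k}]=\operatorname{Tr}(W\,\mathbb{E}[z_{k}z_{k}^{\top}])$, so that $J(L,x_{0})=\operatorname{Tr}(WS(L))$; this already identifies the two objectives once $S(L)$ is shown to be the feasible $S$.

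First I would treat the forward direction. For an admissible $L\in\mathcal{L}$, Lemma~\ref{lem1} gives $\rho(\overline{C_{L}})<1$, hence $\rho(\alpha\overline{C_{L}})<1$, so the series defining $S(L)$ converges and $S(L)\succeq 0$. Propagating the augmented dynamics and using Assumption~\ref{A1} (zero means, mutual independence, $\operatorname{Var}(v_{k})=\sigma$, $\operatorname{Cov}(w_{k})=\Sigma$) yields the one-step recursion $\mathbb{E}[z_{k+1}z_{k+1}^{\top}]=\overline{A_{L}}\,\mathbb{E}[z_{k}z_{k}^{\top}]\,\overline{A_{L}}^{\top}+\sigma\,\overline{A^1_{L}}\,\mathbb{E}[z_{k}z_{k}^{\top}]\,\overline{A^1_{L}}^{\top}+\overline{L}\Sigma\overline{L}^{\top}$, with $\mathbb{E}[z_{0}z_{0}^{\top}]=\overline{L}X_{0}\overline{L}^{\top}$. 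Multiplying by $\alpha^{k+1}$, summing over $k$, and using $\sum_{k\ge 0}\alpha^{k+1}=\alpha/(1-\alpha)$ telescopes into exactly the equality constraint~\eqref{eqp32}; thus $(S(L),L)$ is feasible with $\operatorname{Tr}(WS(L))=J(L,x_{0})$. Applying this at $L=L^{*}\in\mathcal{L}$ yields $J_{p}\le J^{*}$.

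The reverse direction is where the main difficulty lies. Given a feasible pair $(S,L)$, the equality~\eqref{eqp32} is a discounted Lyapunov (Stein) equation $S=\alpha(\overline{A_{L}}S\overline{A_{L}}^{\top}+\sigma\overline{A^1_{L}}S\overline{A^1_{L}}^{\top})+Z$ with forcing $Z=\overline{L}X_{0}\overline{L}^{\top}+\frac{\alpha}{1-\alpha}\overline{L}\Sigma\overline{L}^{\top}$. Rescaling $\overline{A_{L}}\mapsto\sqrt{\alpha}\,\overline{A_{L}}$ and $\overline{A^1_{L}}\mapsto\sqrt{\alpha}\,\overline{A^1_{L}}$ casts it in the form of Lemma~\ref{lem1}(v), whose solvability condition is $\rho(\alpha\overline{C_{L}})<1$; because $\alpha\in(0,1)$, this is strictly weaker than $\rho(\overline{C_{L}})<1$, so the feasible gains form a set possibly larger than $\mathcal{L}$, and one cannot immediately conclude $L\in\mathcal{L}$. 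The obstacle is compounded by the fact that $Z$ has range $\mathbb{R}(\overline{L})$ and is therefore only positive \emph{semi}definite, so the strict-positivity converse of Lemma~\ref{lem1}(v) does not apply verbatim. I would resolve this by using the DGARE solution $P^{*}\succeq 0$ from Lemma~\ref{lem3} as a value-function certificate: via the Bellman relation $V^{*}(x_{k})\le Q^{*}(x_{k},u_{k})$ and a telescoping argument, any feasible $L$ satisfies $\operatorname{Tr}(WS)=J(L,x_{0})\ge J^{*}$ provided the transversality condition $\alpha^{k}\mathbb{E}[x_{k}^{\top}P^{*}x_{k}]\to 0$ holds, and it is precisely the exact-detectability hypothesis of Assumption~\ref{A2}(ii) (together with Lemma~\ref{lem2} controlling the discounted spectrum) that secures this transversality and forces any optimizing $L$ to be mean-square stabilizing. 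Combining the two directions gives $J_{p}=J^{*}$ and $L_{p}=L^{*}$, and once $L=L^{*}\in\mathcal{L}$ the operator $I-\alpha\overline{C_{L^{*}}}$ is nonsingular, so $S=S(L)$ is the unique solution.

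Finally, for uniqueness of $(S_{p},L_{p})$ I would appeal to Lemma~\ref{lem3}: the gain $L^{*}=-(H^{*}_{22})^{-1}(H^{*}_{12})^{\top}$ is determined by the unique solution $P^{*}\succeq 0$ of the DGARE~\eqref{are1}, so $L_{p}=L^{*}$ is unique, and the nonsingular Stein equation at $L^{*}$ then pins down $S_{p}=S(L^{*})$ uniquely. I expect the covariance recursion and the telescoping sum to be routine; the hard part is the reverse implication, namely certifying mean-square stability and the lower bound $\operatorname{Tr}(WS)\ge J^{*}$ from a merely semidefinite forcing term under a discount factor, which is exactly where stabilizability and exact detectability enter.
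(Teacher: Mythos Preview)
Your forward direction is essentially the paper's: define $S(L)=\sum_{k\ge0}\alpha^{k}\mathbb{E}[z_{k}z_{k}^{\top}]$, derive the one-step second-moment recursion under Assumption~\ref{A1}, and telescope to~\eqref{eqp32}. The only cosmetic difference is that the paper vectorizes first and works with the $\overline{C_{L}}$-recursion on $\operatorname{vec}(\mathbb{E}[z_{k}z_{k}^{\top}])$, whereas you stay at the matrix level. One detail you should address: since $z_{k}=\overline{L}x_{k}$, the matrix $S(L)=\overline{L}(\cdot)\overline{L}^{\top}$ has rank at most $n$, so it meets~\eqref{eqp31} only in the closure $S\succeq0$; the paper asserts $\overline{L}\big(X_{0}+\tfrac{\alpha}{1-\alpha}\Sigma\big)\overline{L}^{\top}\succ0$ and a strictly positive $S$, but in either treatment $J_{p}$ is really an infimum.

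The reverse direction is where you genuinely diverge. The paper does not use a Bellman-inequality or transversality argument. Instead it invokes Lemma~\ref{lem2} together with Lemma~\ref{lem1} (and an external lemma from the reference by Lai et~al.) to assert that, under the discount-factor threshold $\alpha>1-\lambda_{\min}(\cdot)/\lambda_{\max}(\cdot)$, the condition $\rho(\overline{C_{L}})<1$ is \emph{equivalent} to~\eqref{eqp32} admitting a unique positive solution $S$. This identifies the feasible gains of Problem~\ref{p2} with $\mathcal{L}$, so $J_{p}\ge J^{*}$ follows simply by comparing infima over the same set, and uniqueness of $(S_{p},L_{p})$ drops out of the uniqueness of $L^{*}$. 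Your route through $P^{*}$ and exact detectability is more careful about the gap between $\rho(\alpha\overline{C_{L}})<1$ and $\rho(\overline{C_{L}})<1$ and, if completed, would avoid the $\alpha$-threshold; but it is longer, and you still owe the identification $\operatorname{Tr}(WS)=J(L,x_{0})$ for a general feasible $(S,L)$, which requires $S=S(L)$ and hence nonsingularity of $I-\alpha\overline{C_{L}}$ --- not automatic from $S\succ0$ when the forcing term is only semidefinite. The paper delegates precisely this technicality to the cited external lemma rather than arguing it from scratch.
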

\begin{proof}
	Using the properties of matrix trace, we can rewrite the objective function of Problem~\ref{p1}   as
	\begin{equation*}
	J\left(L, x_{0}\right)=	 \operatorname{Tr}(W S)\,, 
	\end{equation*}
	where 
	\begin{equation*}
	S \triangleq  \sum_{k=0}^{\infty}\alpha^{k} \mathbb{E}\left[\begin{array}{l}
	x_{k} \\
	L	x_{k}
	\end{array}\right]\left[\begin{array}{l}
	x_{k} \\
	L	x_{k}
	\end{array}\right]^{\top}\,.	
	\end{equation*} 
	Using  Assumption~\ref{A1},  we can obtain the following recursive relationship equation
	\begin{equation*}
	\operatorname{vec}\left( \mathbb{E}\left[z_{k+1} z_{k+1}^{\top}\right] \right) 
	= \overline{C_{L}} \operatorname{vec}\left( \mathbb{E}\left[z_{k} z_{k}^{\top}\right]\right) + \operatorname{vec} \left(  \overline{L} \Sigma \overline{L}^{\top}\right).	
	\end{equation*}	
	Thus, for $k \geq 1$,  we have  
	\begin{equation*}
	\begin{aligned}
	&\operatorname{vec}\left( \mathbb{E}\left[z_{k} z_{k}^{\top}\right]\right) \\
	=&\overline{C_{L}}^k \operatorname{vec}\left( \overline{L} X_{0}\overline{L}^{\top}\right) +\sum_{j=0}^{k-1} \overline{C_{L}}^{j} \operatorname{vec}\left( \overline{L} \Sigma \overline{L}^{\top}\right)\,,	
	\end{aligned}	
	\end{equation*}	
	where $\overline{C_{L}}\triangleq \overline{A_{L}} \otimes \overline{A_{L}}+\sigma \overline{A^1_{L}} \otimes \overline{A^1_{L}}$. 
	Hence, $\operatorname{vec}\left(S\right) $  becomes
	\begin{equation*}		
	\begin{aligned}
	&\operatorname{vec}\left(S\right)\\
	=&\sum_{k=0}^{\infty}\alpha^{k} \overline{C_{L}}^k \operatorname{vec}\left( \overline{L} X_{0}\overline{L}^{\top}\right)\\
	+ &\sum_{k=1}^{\infty}\alpha^{k}\sum_{j=0}^{k-1} \overline{C_{L}}^{j} \operatorname{vec}\left( \overline{L} \Sigma \overline{L}^{\top}\right)\,.
	\end{aligned}			
	\end{equation*}
	By an algebraic manipulation, one can obtain that
	\begin{equation*}
	\begin{aligned}
	&\operatorname{vec}\left(S\right) -\alpha\overline{C_{L}} \operatorname{vec}\left(S\right)\\ 
	%=&\operatorname{vec}\left(\overline{L} X_{0} \overline{L}^{\top}\right) +\sum_{k=1}^{\infty}\alpha ^{k} \operatorname{vec}\left( \overline{L} \Sigma \overline{L}^{\top}\right)\\ 
	=&\operatorname{vec}\left(\overline{L} X_{0} \overline{L}^{\top}\right) +\frac{\alpha}{1-\alpha} \operatorname{vec}\left( \overline{L} \Sigma \overline{L}^{\top}\right)\,,
	\end{aligned}	
	\end{equation*}
	which implies that  $S$  satisfies \eqref{eqp32}.
	
	Since $X_{0}\triangleq\mathbb{E}\left[x_{0} x_{0}^{\top}\right]=\Sigma_{0}+\mu_{0}\mu_{0}^{\top}\succ 0  $ leads to $\overline{L} \left( X_{0} +\frac{\alpha}{1-\alpha} \Sigma \right) \overline{L}^{\top}\succ 0  $. Then, when  $\alpha >1-\frac{\lambda_{\min }\left(\overline{L} \left( X_{0} +\frac{\alpha}{1-\alpha} \Sigma \right) \overline{L}^{\top}\right)}{\lambda_{\max }\left( \overline{A_{L}} S \overline{A_{L}}^{\top}+\sigma\overline{A^1_{L}} S \overline{A^1_{L}} ^{\top}\right)}$,  one can obtain from  Lemma 5 in \cite{lai2023model}, Lemma~\ref{lem1} and Lemma~\ref{lem2}  that  $\rho\left(\overline{C_{L}}\right)<1$ if and only if \eqref{eqp32} has a unique  solution $S \succ 0$. It follows from  Lemma~\ref{lem1}    that  
	%$\rho\left(\overline{C_{L}}\right)<1$ if and only if  
	$L \in \mathcal{L}$. Therefore, we can replace the constraint  $L \in \mathcal{L}$  in Problem~1 by  $S \succ 0$  without changing its optimal point.
	
	If  $\left(S_{p}, L_{p}\right)$  is the optimal point of Problem~\ref{p2} and the corresponding optimal value is  $J_{p}$,  then  $ S_{p} \succ 0$  and  $\rho\left(\overline{C_{L_{p}}}\right)<1$. Thus $ L_{p} \in \mathcal{L}$  is a feasible point of Problem~\ref{p1}, thereby,  $J_{p} \geq J^{*}$. Furthermore,  if  $S_{p} $ is the unique solution of \eqref{eqp32} with  $L_{p}=L^{*} \in \mathcal{L}$,  then the resulting objective function of  Problem~\ref{p2} is  $J_{p}=J^{*}$. Thus, we can conclude that    $J_{p}=J^{*}$. The uniqueness of  $\left(S_{p}, L_{p}\right)$  can be shown by the uniqueness of  $L^{*}$.\qed
\end{proof}
\begin{Remark}\rm
	Unlike the system studied in \cite{li2022model}, which contains only additive noise, the system in this paper is more general and contains both multiplicative and additive noises. The presence of 
	multiplicative noise makes the generalized expression of the augmented system more complex and difficult to obtain.  To address this challenge, unlike the approach in \cite{li2022model} which establishes equations satisfied by $S$ using the derived general term formula for augmented system $z_{k+1}$, we  directly focus on the term $\mathbb{E}\left[z_{k+1} z_{k+1}^{\top}\right]$, utilize the technique of vectorization to obtain the constraint satisfied by $\operatorname{vec}\left(S\right)$ directly from the recursive relationship satisfied by $\operatorname{vec}\left( \mathbb{E}\left[z_{k+1} z_{k+1}^{\top}\right] \right) $, and then use the properties of the Kronecker product and $S = \operatorname{vec}^{-1}(\operatorname{vec}(S))$ to obtain the constraint satisfied by $S$.
\end{Remark}
\subsection{Model-based  stochastic LQR  via SDP} 
We begin by representing   the  Lagrange dual problem associated with the nonconvex optimization in Problem~\ref{p2} as a standard convex optimization problem as shown in Problem~\ref{p3}.
\begin{Problem}\rm	\label{p3}  (Dual Problem)
	Convex  optimization with optimization variables $M \in \mathcal{R}^{n \times n}$ and $F =\begin{bmatrix}
	F_{11}&F_{12}\\
	F_{12}^{\top}&F_{22}
	\end{bmatrix} \in \mathcal{S}^{n+m}$ with $F_{11}\in \mathcal{S}^{n}$, $F_{22}\in \mathcal{S}^{m}$,  and  $F_{12} \in \mathcal{R}^{n \times m}$. 	
	\begin{align}
	&\underset{F,M }{\operatorname{Maximize}}  \ \operatorname{Tr}\left( \left( X_{0}+\frac{\alpha}{1-\alpha} \Sigma\right) M\right), \label{dp1}\\
	&	\text { subject to } \ F_{22}\succ 0\,, \label{dp1-3} \\ 		
	& \mathscr{P}(F)-M\succeq 0\,,\label{dp1-1}\\			
	& \alpha  [\begin{array}{ll}
	A & B
	\end{array}]^{\top}\mathscr{P}(F)[\begin{array}{ll}
	A & B
	\end{array}] -F+W\notag \\
	&+\alpha \sigma [\begin{array}{ll}
	A_{1} & B_{1}
	\end{array}]^{\top}\mathscr{P}(F)[\begin{array}{ll}
	A_{1} & B_{1}
	\end{array}]  \succeq 0\,, \label{dp1-2}
	\end{align} 
	where $\mathscr{P}(F)\triangleq F_{11}-F_{12} F_{22}^{-1} F_{12}^{\top}$.
\end{Problem}
\begin{theorem}\rm  
	Problem~\ref{p3} is an equivalent constrained convex optimization formulation of the Lagrange  dual problem  associated with   Problem~\ref{p2}.	
\end{theorem}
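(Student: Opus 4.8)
The plan is to form the Lagrange dual of the minimization Problem~\ref{p2} and to show that, after one epigraph reformulation, it is precisely the convex program Problem~\ref{p3}; no strong-duality argument is needed. Write $\Theta := X_{0}+\frac{\alpha}{1-\alpha}\Sigma \succ 0$, attach a single free symmetric multiplier $F\in\mathcal{S}^{n+m}$ to the matrix equality~\eqref{eqp32}, and keep $S\succ 0$ as the domain on which the Lagrangian is infimized. Thus
\[
\mathcal{L}(S,L,F)=\operatorname{Tr}(WS)+\operatorname{Tr}\!\Big(F\big[\alpha(\overline{A_L}S\overline{A_L}^{\top}+\sigma\overline{A^1_L}S\overline{A^1_L}^{\top})+\overline{L}\,\Theta\,\overline{L}^{\top}-S\big]\Big),
\]
the dual function is $g(F)=\inf_{S\succ 0,\,L}\mathcal{L}(S,L,F)$, and the dual problem is $\sup_{F}g(F)$.

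The first step is the factorizations $\overline{A_L}=\overline{L}\,[A\ B]$ and $\overline{A^1_L}=\overline{L}\,[A_1\ B_1]$, which are immediate from the definitions of $\overline{A_L}$, $\overline{A^1_L}$ and $\overline{L}$. By the cyclic property of the trace, every occurrence of $L$ in $\mathcal{L}$ is routed through the single $n\times n$ matrix $N(L):=\overline{L}^{\top}F\overline{L}=F_{11}+F_{12}L+L^{\top}F_{12}^{\top}+L^{\top}F_{22}L$. Collecting the terms linear in $S$ gives the coefficient $\Phi(L):=W+\alpha[A\ B]^{\top}N(L)[A\ B]+\alpha\sigma[A_1\ B_1]^{\top}N(L)[A_1\ B_1]-F$, so that $\inf_{S\succ 0}\operatorname{Tr}(\Phi(L)S)$ equals $0$ when $\Phi(L)\succeq 0$ and $-\infty$ otherwise, while the remaining term is $\operatorname{Tr}\!\big(N(L)\,\Theta\big)$. (Dualizing $S\succeq 0$ instead with a multiplier $\Lambda\succeq 0$ forces $\Lambda=\Phi(L)$ and yields the same condition $\Phi(L)\succeq 0$, so the choice is immaterial.)

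The core of the argument is the infimum over $L$. Under $F_{22}\succ 0$ (constraint~\eqref{dp1-3}), completing the square yields $N(L)=(L+F_{22}^{-1}F_{12}^{\top})^{\top}F_{22}(L+F_{22}^{-1}F_{12}^{\top})+\mathscr{P}(F)$, hence $N(L)\succeq\mathscr{P}(F)$ for every $L$, with equality at $L^{\star}=-F_{22}^{-1}F_{12}^{\top}$ (which coincides with the Q-function gain~\eqref{lstar}). Two consequences follow. First, since congruence preserves the positive semidefinite order, $\Phi(L)\succeq\Phi(L^{\star})$ for all $L$; because $g(F)$ is an infimum over $L$, a single $L$ with $\Phi(L)\not\succeq 0$ already forces $g(F)=-\infty$, so $g(F)>-\infty$ holds iff $\Phi(L)\succeq 0$ for all $L$, iff $\Phi(L^{\star})\succeq 0$, which is exactly~\eqref{dp1-2}. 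Second, using $\Theta\succ 0$ and $N(L)-\mathscr{P}(F)\succeq 0$ gives $\operatorname{Tr}(N(L)\Theta)\ge\operatorname{Tr}(\mathscr{P}(F)\Theta)$, with equality again at $L^{\star}$. Therefore, whenever~\eqref{dp1-3} and~\eqref{dp1-2} hold, $g(F)=\operatorname{Tr}(\mathscr{P}(F)\Theta)$, and the dual reduces to maximizing $\operatorname{Tr}(\mathscr{P}(F)\Theta)$ subject to~\eqref{dp1-3} and~\eqref{dp1-2}.

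Finally, I would linearize this concave objective with an auxiliary symmetric matrix $M$: since $\Theta\succ 0$, maximizing $\operatorname{Tr}(\Theta M)$ over $M\preceq\mathscr{P}(F)$ drives $M\to\mathscr{P}(F)$, so the value is unchanged while the constraint $\mathscr{P}(F)-M\succeq 0$ is exactly~\eqref{dp1-1} and the objective is~\eqref{dp1}; this recovers Problem~\ref{p3} verbatim. Convexity is then clear: the objective is linear, and with $F_{22}\succ 0$ the Schur complement $\mathscr{P}(F)$ is matrix-concave in $F$, so~\eqref{dp1-1} (equivalently the LMI $\big[\begin{smallmatrix}F_{11}-M & F_{12}\\ F_{12}^{\top}&F_{22}\end{smallmatrix}\big]\succeq 0$) and~\eqref{dp1-2} each cut out convex sets. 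I expect the main obstacle to be the coupled infimum over $(S,L)$ in the third paragraph: one must argue that the single minimizer $L^{\star}$ of the Schur form $N(L)$ simultaneously controls feasibility (the worst case of $\Phi(L)\succeq 0$) and the objective, and that any violation of~\eqref{dp1-2} propagates through the infimum over $L$ to make the dual value $-\infty$.
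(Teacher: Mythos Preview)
Your approach is essentially correct and close to the paper's, with some pleasant streamlinings: the factorizations $\overline{A_L}=\overline L\,[A\ B]$ and $\overline{A^1_L}=\overline L\,[A_1\ B_1]$ are not used explicitly in the paper, and your monotonicity argument $\Phi(L)\succeq\Phi(L^\star)$ handles the reduction of the ``for all $L$'' feasibility condition to the single constraint~\eqref{dp1-2} more transparently than the paper's ``substitute $L^\star$ into~\eqref{dp2-2}'' step. The obstacle you flag at the end---that $L^\star$ must simultaneously control the worst case of $\Phi(L)\succeq 0$ and the objective---is in fact already resolved by your completing-the-square argument and needs no further work.

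The genuine gap is that you never justify the restriction $F_{22}\succ 0$. The Lagrange dual is a supremum over \emph{all} $F\in\mathcal S^{n+m}$, so to conclude that Problem~\ref{p3} is the full dual (not merely its restriction to $\{F:F_{22}\succ 0\}$) you must argue that any $F$ with $F_{22}\not\succ 0$ either yields $g(F)=-\infty$ or can be matched by some $\check F$ with $\check F_{22}\succ 0$. The paper carries out exactly this case analysis: if $F_{22}$ has a negative eigenvalue, or if $F_{22}\succeq 0$ is singular with $\mathbb R(F_{12}^\top)\not\subseteq\mathbb R(F_{22})$, then $\inf_L\operatorname{Tr}(N(L)\Theta)=-\infty$; if $F_{22}\succeq 0$ is singular with $\mathbb R(F_{12}^\top)\subseteq\mathbb R(F_{22})$, the pseudoinverse Schur complement $F_{11}-F_{12}F_{22}^\dagger F_{12}^\top$ coincides with $F_{11}-F_{12}\check F_{22}^{-1}F_{12}^\top$ for an explicitly constructed $\check F_{22}\succ 0$, so no optimal value is lost by imposing~\eqref{dp1-3}. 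Your proposal simply assumes~\eqref{dp1-3} from the outset, which leaves this equivalence unproven.
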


\begin{proof} 
	The Lagrangian  associated with Problem~\ref{p2} is	
	\begin{equation*}
	\begin{aligned}
	&\mathscr{L}_{1}(S,L, F_{0}, F)\\
	\triangleq &\operatorname{Tr}(W S)-\operatorname{Tr}(F_{0} S) \\
	&+\operatorname{Tr}\left(\left(\alpha  \left( \overline{A_{L}} S \overline{A_{L}}^{\top}+\sigma \overline{A^1_{L}} S \overline{A^1_{L}}^ {\top}\right)+\overline{L} X_{0}\overline{L}^{\top}\right.\right. \\
	& \left.\left.+\frac{\alpha}{1-\alpha} \overline{L} \Sigma \overline{L}^{\top}-S\right) F\right) \\
	=&\operatorname{Tr}\left(\left(\alpha  \overline{A_{L}}^{\top} F \overline{A_{L}}+\alpha \sigma \overline{A^1_{L}}^ {\top} F\overline{A^1_{L}}-F-F_{0}+W\right) S\right) \\
	&+\operatorname{Tr}\left(\left( X_{0}+\frac{\alpha}{1-\alpha} \Sigma\right) \overline{L} ^{\top} F\overline{L}\right)\,,	
	\end{aligned}			
	\end{equation*}
	where the matrices $F_{0}\in \mathcal{S}^{n+m}_+$ and $F\in \mathcal{S}^{n+m}$ are the Lagrange multipliers associated with the inequality and equality constraints of Problem~\ref{p2}, respectively. 
	
The dual problem is $\underset{F_{0}, F }{\operatorname{Maximize}}\ \mathcal{D}_{1}(F_{0}, F)$,
where the  Lagrange dual function $\mathcal{D}_{1}(F_{0}, F)$   is defined as	
	\begin{equation*}
	\mathcal{D}_{1}(F_{0}, F)\triangleq \underset{L} {\inf} \ \underset{S\succeq 0} {\inf} \ \mathscr{L}_{1}(S,L,F_{0}, F)\,.	
	\end{equation*}
	
	When the Lagrangian $ \mathscr{L}_{1}(S,L,F_{0}, F)$ is unbounded  from below in $S$ and $L$, the dual function $\mathcal{D}_{1}(F_{0}, F)$ takes on the
	value $-\infty$. Thus for the Lagrange dual problem: 
	\begin{equation*}
	\underset{F_{0}, F }{\operatorname{Maximize}} \   \mathcal{D}_{1}(F_{0}, F)\,,
	\end{equation*}
	the dual feasibility requires that $F_{0} \succeq 0$ and that $\mathcal{D}_{1}(F_{0}, F)>-\infty$.  For given $F_{0}$ and $F$, since $S\succ 0$, the Lagrange dual function  $\mathcal{D}_{1}(F_{0}, F)$  becomes
	\begin{equation*}
	\begin{array}{l}
	\mathcal{D}_{1}(F_{0}, F)\\
	=  \left\{\begin{array}{ll}
	\underset{L  }{\inf} \ \operatorname{Tr}\left(\left( X_{0}+\frac{\alpha}{1-\alpha} \Sigma\right) \overline{L} ^{\top} F\overline{L}\right)\,, & \text { if } (F_{0}, F) \in \mathcal{F}
	\\
	-\infty\,, & \text { otherwise }
	\end{array}\right.
	\end{array}	
	\end{equation*}
	where 
	\begin{equation*}
	\begin{aligned}
	&\mathcal{F}\triangleq\left\lbrace (F_{0}, F):\right.\\&\left.\alpha  \overline{A_{L}}^{\top} F \overline{A_{L}}+\alpha \sigma \overline{A^1_{L}}^ {\top} F\overline{A^1_{L}}-F-F_{0}+W \succeq0,\forall  L\in \mathcal{L}\right\rbrace. \end{aligned}
	\end{equation*} 
	Using the block-diagonal matrix representation of the Lagrange multiplier $F$, 
	\begin{equation*}
	\mathscr{L}_{2}(L, F) \triangleq \operatorname{Tr}\left(\left( X_{0}+\frac{\alpha}{1-\alpha} \Sigma\right) \overline{L} ^{\top} F\overline{L}\right)
	\end{equation*}  
	can be represented as
	\begin{equation*}
	\begin{aligned}
	&\mathscr{L}_{2}(L, F)=\operatorname{Tr} \left(\left( X_{0}+\frac{\alpha}{1-\alpha} \Sigma\right)\right.\\
	&\left. \left(L^{\top} F_{22} L+L^{\top} F_{12}^{\top}+F_{12} L+F_{11}\right)\right)\,. 			
	\end{aligned}		 	
	\end{equation*}	
	Since the Lagrange multiplier $F_{0}$ only appears in the constraint condition: 
	\begin{equation*}
	\alpha  \overline{A_{L}}^{\top} F \overline{A_{L}}+\alpha \sigma \overline{A^1_{L}}^ {\top} F\overline{A^1_{L}}-F-F_{0}+W \succeq 0, 
	\end{equation*}
	the dual feasibility is equivalent to	
	\begin{equation*}
	\alpha  \overline{A_{L}}^{\top} F \overline{A_{L}}+\alpha \sigma \overline{A^1_{L}}^ {\top} F\overline{A^1_{L}}-F+W \succeq 0. 
	\end{equation*}
	Therefore, the Lagrange  dual problem associated with Problem~\ref{p2}  
	%\begin{align} \underset{F_{0}, F }{\operatorname{Maximize}} \ &  \mathcal{D}_{1}(F_{0}, F)\,,\label{d1}\\ \text { subject to } \ & 	F_{0}\succeq 0 \,. \label{d1-2} \end{align}
	is equivalent to
	\begin{align}
	&\underset{ F}{\operatorname{Maximize}} \  \underset{ L } {\inf} \  \mathscr{L}_{2}(L, F), \label{dp2-1}\\
	&\text { subject to }   	\alpha  \overline{A_{L}}^{\top} F \overline{A_{L}}+\alpha \sigma \overline{A^1_{L}}^ {\top} F\overline{A^1_{L}}-F+W \succeq 0 \,. \label{dp2-2}
	\end{align}		
	In order to   obtain an explicit expression for the function  $\mathcal{D}_2 ( F)\triangleq\underset{ L } {\inf} \  \mathscr{L}_{2}(L, F)$, consider the
	following three different    cases: 
	
	Case 1: Let $ F_{22}\succ 0$. In this case, let $\frac{\partial \mathscr{L}_{2}(L, F)}{\partial L}=0$, one can obtain
	\begin{equation*}
	\begin{array}{l}
	F_{22} L \left( X_{0}+\frac{\alpha}{1-\alpha} \Sigma\right) +F_{22}^{\top} L \left( X_{0}+\frac{\alpha}{1-\alpha} \Sigma\right) ^{\top}\\
	+F_{12}^{\top} \left( X_{0}+\frac{\alpha}{1-\alpha} \Sigma\right) +F_{12}^{\top} \left( X_{0}+\frac{\alpha}{1-\alpha} \Sigma\right) ^{\top}=0\,. 
	\end{array}		
	\end{equation*}	
	By solving the above equation, the optimal  point of the function $\mathscr{L}_{2}(L, F)$  can be solved to be $L^{*}=-F_{22}^{-1} F_{12}^{\top}$, so the  infimum of $\mathscr{L}_{2}(L, F)$ is attained at $L^{*}$,  and hence 
	\begin{equation}\label{oj2}
	\begin{aligned}
	\mathcal{D}_2 ( F) 
	=&   \mathscr{L}_{2}(L^{*}, F)\\
	=&\operatorname{Tr} \left(\left( X_{0}+\frac{\alpha}{1-\alpha} \Sigma\right) \mathscr{P}(F)\right)\,.
	\end{aligned}	
	\end{equation}
	Substituting $L^{*}$ into the constraint \eqref{dp2-2}  and combining it with the objective function in \eqref{oj2} yields an alternative equivalent representation of the dual problem  \eqref{dp2-1}--\eqref{dp2-2}:
	\begin{align*}
	\underset{F }{\operatorname{Maximize}} &\ \operatorname{Tr} \left(\left( X_{0}+\frac{\alpha}{1-\alpha} \Sigma\right) \mathscr{P}(F)\right)\,,\\
	\text { subject to } \  & 	\alpha  [\begin{array}{ll}
	A & B
	\end{array}]^{\top}\mathscr{P}(F)[\begin{array}{ll}
	A & B
	\end{array}]-F+W\\
	+&\alpha \sigma [\begin{array}{ll}
	A_{1} & B_{1}
	\end{array}]^{\top}\mathscr{P}(F)[\begin{array}{ll}
	A_{1} & B_{1}
	\end{array}]
	\succeq 0\,.
	\end{align*}		
	Case 2: Let $F_{22} \succeq 0 $ be a singular matrix, suppose that $\mathbb{R}\left(F_{12}^{\top}\right) \subseteq   \mathbb{R}\left(F_{22}\right)$. In this case, the infimum of $\mathscr{L}_{2}(L, F)$ still exists. Suppose $\operatorname{rank}\left( F_{22}\right)  = q<m $, then $F_{22}$ can be factored as $F_{22}=U_{q} \Lambda_{q} U_{q}^{\top}$,	 where $ U_{q}\in \mathcal{R}^{m \times q}$ satisfies $ U_{q}^{\top}U_{q} =I_{q}$ and $\Lambda_{q} =
	\operatorname{diag}(\lambda_1,\cdots,\lambda_q)$ with $\lambda_1\geq \lambda_2 \geq\cdots\geq\lambda_q>0$.
	Moreover, the columns of  $ U_{q} $ span $\mathbb{R}\left(F_{22}\right)$. The pseudo-inverse of the singular matrix $F_{22}$ can be represented as $F_{22}^{\dagger}=U_{q} \Lambda_{q}^{-1} U_{q}^{\top}$. Following a procedure similar to the Case 1, the Lagrange  dual problem associated with Problem~\ref{p2}  is derived as
	\begin{align*}
	\underset{F }{\operatorname{Maximize}} &\ \operatorname{Tr} \left(\left( X_{0}+\frac{\alpha}{1-\alpha} \Sigma\right) \left(F_{11}-F_{12} F_{22}^{\dagger} F_{12}^{\top}\right)\right)\,,\\
	\text { subject to } \ & 	\alpha  [\begin{array}{ll}
	A & B
	\end{array}]^{\top}\left(F_{11}-F_{12} F_{22}^{\dagger} F_{12}^{\top}\right)[\begin{array}{ll}
	A & B
	\end{array}]\\
	+&\alpha \sigma [\begin{array}{ll}
	A_{1} & B_{1}
	\end{array}]^{\top}\left(F_{11}-F_{12} F_{22}^{\dagger} F_{12}^{\top}\right)[\begin{array}{ll}
	A_{1} & B_{1}
	\end{array}]\\
	-&F+W \succeq 0\,.
	\end{align*}				
	Since $\mathbb{R}\left(F_{12}^{\top}\right) \subseteq   \mathbb{R}\left(F_{22}\right)$, there exists  an unitary matrix $U_{m-q}$ such that the columns of $U_{m-q}$ span  $\mathbb{N}\left(F_{12}\right)$. Let   $\Lambda_{m-q}$ be an arbitrary diagonal matrix with positive diagonal elements, and construct a matrix $\overset{\frown} {F}_{22}\succ 0$ as
	\begin{equation*}
	\overset{\frown} {F}_{22}=[\begin{array}{ll}
	U_{q} & U_{m-q}
	\end{array}]\left[ \begin{array}{cc}
	\Lambda_{q} & 0 \\
	0 & \Lambda_{m-q}
	\end{array}\right] [\begin{array}{ll}
	U_{q} & U_{m-q}
	\end{array}]^{\top} \,. 	
	\end{equation*}
	Noting  that
	\begin{equation*}
	\begin{array}{l}
	F_{12} \overset{\frown} {F}_{22}^{-1} F_{12}^{\top} \\
	=F_{12}[\begin{array}{ll}
	U_{q} & U_{m-q}
	\end{array}]\left[\begin{array}{cc}
	\Lambda_{q}^{-1} & 0 \\
	0 & \Lambda_{m-q}^{-1}
	\end{array}\right]\left[\begin{array}{c}
	U_{q}^{\top} \\
	U_{m-q}^{\top}
	\end{array}\right] F_{12}^{\top} \\
	=F_{12} U_{q}\Lambda_{q}^{-1} U_{q}^{\top} F_{12}^{\top}-\underbrace{F_{12} U_{m-q} \Lambda_{m-q}^{-1} U_{m-q}^{\top} F_{12}^{\top}}_{0}\\
	=F_{12} F_{22}^{\dagger} F_{12}^{\top}\,,
	\end{array}	
	\end{equation*}		
	it can be concluded that every  admissible  candidate $F_{22} \succeq 0$ leads to an objective value   which can be obtained by replacing a  counterpart 
	$\overset{\frown} {F}_{22}\succ0$.
	
	Case 3: $ F_{22}$  has at least one negative eigenvalue or $F_{22} \succeq 0$  while   $\mathbb{R}\left(F_{12}^{\top}\right) \nsubseteq \mathbb{R}\left(F_{22}\right)$. In this case, the Lagrangian is unbounded from below through minimizing $\mathscr{L}_{2}(L, F)$ in terms of $L$.

	To summarize, the existence of $\underset{ L } {\inf}  \mathscr{L}_{2}(L, F) $ requires that (i) $F_{22}\succ0$ or (ii) $F_{22} \succeq 0 $ with an additional constraint  $\mathbb{R}\left(F_{12}^{\top}\right) \subseteq \mathbb{R}\left(F_{22}\right)$. Moreover, every permissible $F_{22} \succeq 0$ results in an objective value that can be alternatively derived by a  counterpart  $\overset{\frown} {F}_{22}\succ0$. Thus, without loss of generality,  we follow up with the assumption that $F_{22}\succ 0$.  Therefore, the dual problem \eqref{dp2-1}--\eqref{dp2-2} is equivalent to
	\begin{align}
	&\underset{F }{\operatorname{Maximize}} \ \operatorname{Tr} \left(\left( X_{0}+\frac{\alpha}{1-\alpha} \Sigma\right) \mathscr{P}(F)\right)\,,\label{dp3-1}\\
	&\text { subject to } \   F_{22}\succ 0\,, \label{dp3-3}\\
	&	\alpha  [\begin{array}{ll}
	A & B
	\end{array}]^{\top}\mathscr{P}(F)[\begin{array}{ll}
	A & B
	\end{array}]-F+W   \notag  \\ 
	+&\alpha \sigma [\begin{array}{ll}
	A_{1} & B_{1}
	\end{array}]^{\top}\mathscr{P}(F)[\begin{array}{ll}
	A_{1} & B_{1}
	\end{array}] 
	\succeq 0\,. \label{dp3-2}
	\end{align}		
	Finally, by introducing the slack variable $ M \in \mathcal{S}^{n}$ and the additional constraint \eqref{dp1-1},	
	the Lagrange  dual problem	 \eqref{dp3-1}--\eqref{dp3-2} can be equivalently described by Problem~\ref{p3}.	In fact, let    $F^{*}$  be the optimal point of the dual problem \eqref{dp3-1}--\eqref{dp3-2}  and $\left(F^{*},M^{*} \right) $  be the optimal point of Problem~\ref{p3}. According to the constraint \eqref{dp1-1} and positive definiteness of   $X_{0}+\frac{\alpha}{1-\alpha} \Sigma$, 
	\begin{equation*}
	\begin{aligned}
	&\operatorname{Tr}  \left(\left( X_{0}+\frac{\alpha}{1-\alpha} \Sigma\right)  M^{*}\right) \\
	\leq &\operatorname{Tr}\left(\left( X_{0}+\frac{\alpha}{1-\alpha} \Sigma\right) \mathscr{P}(F^{*})\right)\,.	
	\end{aligned}
	\end{equation*}
	Since $M^{*}$ is the maximum point of the objective function~\eqref{dp1-1}, 
	\begin{equation*}
	\begin{aligned}
	&\operatorname{Tr}\left(\left( X_{0}+\frac{\alpha}{1-\alpha} \Sigma\right)  M^{*}\right)\\
	\geq&\operatorname{Tr}\left(\left( X_{0}+\frac{\alpha}{1-\alpha} \Sigma\right) \mathscr{P}(F^{*})\right)\,,
	\end{aligned}	
	\end{equation*}
	and hence
	\begin{equation*}
	\operatorname{Tr}  \left(\left( X_{0}+\frac{\alpha}{1-\alpha} \Sigma\right) \left(M^{*}-\mathscr{P}(F^{*})\right)\right)
	=0.
	\end{equation*} 
	Consequently, all of the eigenvalues of the matrix  
	$M^{*}-\mathscr{P}(F^{*})$
	are equal to zero. Since  $M^{*}$  is symmetric, it follows that
	\begin{equation*}
	M^{*}=\mathscr{P}(F^{*}) \,,
	\end{equation*}	
	which means that the dual problem	 \eqref{dp3-1}--\eqref{dp3-2} is equivalent to  Problem~\ref{p3}.\qed	
\end{proof}
The next theorem gives  the  relationship between the optimal point to the dual problem associated with the stochastic LQR problem  and the parameters of the optimal Q-function. 

\begin{theorem}\rm	\label{th2}
	The optimal point of Problem~\ref{p3}, which
	is denoted by $\left(F^{*},M^{*} \right) $,  is independent of  $ X_{0}+\frac{\alpha}{1-\alpha} \Sigma$. Furthermore, $F^{*}=H^{*}$ and $M^{*}=P^{*}$,  where $ H^{*}$  is the  parameter of the optimal Q-function  expressed in \eqref{H} and  $P^{*}$  is the solution to  DGARE~\eqref{are1}.
\end{theorem}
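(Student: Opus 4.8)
The plan is to prove the three assertions of Theorem~\ref{th2} in order: feasibility of $(H^*,P^*)$, its optimality (which at once yields $M^*=P^*$ and the independence from $D\triangleq X_0+\frac{\alpha}{1-\alpha}\Sigma$), and finally $F^*=H^*$. Write $G(F)$ for the left-hand side matrix of \eqref{dp1-2}. First I would prove the identity $\mathscr{P}(H^*)=H^*_{11}-H^*_{12}(H^*_{22})^{-1}(H^*_{12})^{\top}=P^*$: inserting the blocks of $H^*$ from \eqref{H} reproduces exactly the right-hand side of the Riccati operator $\mathscr{R}$ in \eqref{are1}, and $P^*=\mathscr{R}(P^*)$ closes the identity. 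Then \eqref{dp1-3} holds because $H^*_{22}=R+\alpha B^{\top}P^*B+\alpha\sigma B_1^{\top}P^*B_1\succ 0$; \eqref{dp1-1} holds with equality since $\mathscr{P}(H^*)-P^*=0$; and substituting $\mathscr{P}(H^*)=P^*$ makes the $\Phi$-dependent part of $G$ together with $W$ reproduce $H^*$ block by block, so $G(H^*)=H^*-H^*=0$ and \eqref{dp1-2} is active as well.

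Next, for an arbitrary feasible $(F,M)$ set $\Phi\triangleq\mathscr{P}(F)$; constraint \eqref{dp1-1} gives $M\preceq\Phi$, hence $\operatorname{Tr}(DM)\le\operatorname{Tr}(D\Phi)$ since $D\succ 0$. To bound $\Phi$ I would write \eqref{dp1-2} as $\Theta(\Phi)\succeq F$, where $\Theta(\Phi)$ is the $(n+m)\times(n+m)$ matrix with blocks $Q+\alpha A^{\top}\Phi A+\alpha\sigma A_1^{\top}\Phi A_1$, $\alpha A^{\top}\Phi B+\alpha\sigma A_1^{\top}\Phi B_1$, and $R+\alpha B^{\top}\Phi B+\alpha\sigma B_1^{\top}\Phi B_1$. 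Its $(2,2)$ block dominates $F_{22}\succ 0$ and is thus positive definite, so the Schur-complement monotonicity ($X\preceq Y$ with positive-definite $(2,2)$ blocks implies $\mathscr{P}(X)\preceq\mathscr{P}(Y)$) applied to $\Theta(\Phi)\succeq F$ yields $\mathscr{R}(\Phi)=\mathscr{P}(\Theta(\Phi))\succeq\mathscr{P}(F)=\Phi$; that is, every feasible $\Phi$ is a sub-solution of DGARE \eqref{are1}. Invoking the maximality of the stabilizing solution $P^*$ (any $\Phi$ with $\mathscr{R}(\Phi)\succeq\Phi$ obeys $\Phi\preceq P^*$) then gives $\operatorname{Tr}(DM)\le\operatorname{Tr}(D\Phi)\le\operatorname{Tr}(DP^*)$ for every feasible point, a bound attained at $(H^*,P^*)$. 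Hence $(H^*,P^*)$ is optimal and $M^*=\mathscr{P}(F^*)=P^*$; because the characterization $\Phi\preceq P^*$ is free of $D$, the optimizer is the same for every $D\succ 0$, which is the independence claim.

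Finally, to pin down $F^*=H^*$ I would use the KKT conditions of the convex Problem~\ref{p3}. Stationarity in $M$ forces the multiplier of \eqref{dp1-1} to equal $D\succ 0$, and complementary slackness with $\mathscr{P}(F^*)-M^*\succeq 0$ reconfirms $M^*=\mathscr{P}(F^*)=P^*$; the multiplier associated with \eqref{dp1-2} is exactly the primal variable $S^*$ of Problem~\ref{p2}, which is positive definite. Complementary slackness $\operatorname{Tr}\left(S^*G(F^*)\right)=0$ with $S^*\succ 0$ and $G(F^*)\succeq 0$ then forces $G(F^*)=0$, and since $\mathscr{P}(F^*)=P^*$ turns $G(F^*)$ into $H^*-F^*$, we conclude $F^*=H^*$.

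I expect the decisive difficulties to be twofold. The first is the maximality implication $\mathscr{R}(\Phi)\succeq\Phi\Rightarrow\Phi\preceq P^*$ for the generalized multiplicative-noise Riccati operator, together with the verification that the Schur reduction of \eqref{dp1-2} lands exactly on $\mathscr{R}$. The second is identifying the \eqref{dp1-2}-multiplier as the positive-definite $S^*$: this strict positivity is precisely what rules out spurious optima, since $F\preceq H^*$ together with $\mathscr{P}(F)=\mathscr{P}(H^*)$ does not on its own force $F=H^*$, so without it the equality $F^*=H^*$ could fail.
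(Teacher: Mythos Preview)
Your feasibility and optimality arguments take a genuinely different route from the paper. The paper never invokes Schur--complement monotonicity or the maximality of $P^*$ among Riccati sub-solutions; instead it simply verifies the KKT system of the convex Problem~\ref{p3} at the candidate $(H^*,P^*)$. Concretely, it writes the Lagrangian with multipliers $G_1,G_2,G_3$ for \eqref{dp1-2}, \eqref{dp1-3}, \eqref{dp1-1}, notes that \eqref{dp1-1} and \eqref{dp1-2} are active at $(H^*,P^*)$ so complementary slackness is free, takes $\tilde G_2=0$ and $\tilde G_3=X_0+\tfrac{\alpha}{1-\alpha}\Sigma$ from stationarity in $M$, and then \emph{constructs} $\tilde G_1=\overline{L^*}\,\tilde G_{11}^1\,(\overline{L^*})^\top$ where $\tilde G_{11}^1\succ0$ is obtained from a discounted closed-loop Lyapunov equation via Lemmas~\ref{lem1}--\ref{lem2}. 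Since KKT is sufficient for a convex problem, $(H^*,P^*)$ is optimal. Your Riccati-comparison argument is more conceptual and yields the sharper fact that every feasible $\Phi=\mathscr{P}(F)$ satisfies $\Phi\preceq P^*$; the paper's mechanical KKT check buys it freedom from having to establish the implication $\mathscr{R}(\Phi)\succeq\Phi\Rightarrow\Phi\preceq P^*$ in the multiplicative-noise setting.

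Your Part~3, however, has exactly the gap you anticipate. The identification of the \eqref{dp1-2}-multiplier with the primal $S^*\succ0$ of Problem~\ref{p2} presumes strong duality for the nonconvex primal, which the paper explicitly declines to prove (Remark~2 says so). In fact the multiplier the paper exhibits, $\tilde G_1=\overline{L^*}\tilde G_{11}^1(\overline{L^*})^\top$, has rank $n<n+m$ and is only positive semidefinite, so complementary slackness cannot force $G(F^*)=0$. The non-uniqueness you worry about is real: the point with $F_{22}=\tfrac12H^*_{22}$, $F_{12}=\tfrac12H^*_{12}$, $F_{11}=P^*+\tfrac12H^*_{12}(H^*_{22})^{-1}(H^*_{12})^\top$ is feasible with the same objective value. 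The paper's proof does not address this either---it merely exhibits $(H^*,P^*)$ as \emph{one} KKT point and writes ``therefore $F^*=H^*$'' without a uniqueness argument---so on this last point you are attempting more than the paper actually proves, and the route via a positive-definite multiplier does not close.
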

\begin{proof}
	By introducing the Lagrange multipliers $G_{1}\in \mathcal{S}^{n+m}_+$, $G_{2}\in \mathcal{S}^{m}_+$ and $G_{3}\in \mathcal{S}^{n}_+$ associated with the inequality  constraints of Problem~\ref{p3},
	the Lagrangian associated with Problem~\ref{p3} is described by
	\begin{equation*}
	\begin{array}{l}
	\mathscr{L}_{3}\left(F, M, G_{1}, G_{2}, G_{3}\right)\\
	=-\operatorname{Tr}\left( \left( X_{0}+\frac{\alpha}{1-\alpha} \Sigma\right)  M\right)  \\	
	-\operatorname{Tr}\left(G_{1} \left(\alpha  [\begin{array}{ll}
	A & B
	\end{array}]^{\top}\mathscr{P}(F)[\begin{array}{ll}
	A & B
	\end{array}]\right.\right.\\
	\left.\left.+\alpha\sigma [\begin{array}{ll}
	A_{1} & B_{1}
	\end{array}]^{\top}\mathscr{P}(F)[\begin{array}{ll}
	A_{1} & B_{1}
	\end{array}] -F+W\right) \right) \\
	-\operatorname{Tr}\left( G_{2}F_{22}\right)
	-\operatorname{Tr}\left(G_{3}\left(\mathscr{P}(F)-M\right)\right) \,.
	\end{array}
	\end{equation*}
	Since Problem~\ref{p3} is convex, the  KKT conditions are  sufficient for the
	points to be primal and dual optimal \cite{boyd2004convex}.  In other words, if $\tilde{F}$, $\tilde{M}$, $\tilde{G}_{1}$,  $\tilde{G}_{2}$ and $\tilde{G}_{3}$ are any points that satisfy the following KKT conditions of Problem~\ref{p3}:
		\begin{itemize} 
		\item[(\romannumeral1)] Primal feasibility: \eqref{dp1-3}--\eqref{dp1-2};
		\item[(\romannumeral2)] Dual feasibility:
		\begin{equation*}
		G_{1} \succeq 0, \quad G_{2} \succeq 0, \quad G_{3} \succeq 0 \,; 
		\end{equation*}	
		\item[(\romannumeral3)] Complementary slackness:
		\begin{align}
		&\operatorname{Tr}\left(G _ { 1 } \left(\alpha [\begin{array}{ll}
		A & B
		\end{array}]^{\top}\mathscr{P}(F) [\begin{array}{ll}
		A & B
		\end{array}]-F+W\right. \right.\notag \\
		&\left.\left.
		+\alpha\sigma [\begin{array}{ll}
		A_{1} & B_{1}
		\end{array}]^{\top}\mathscr{P}(F)[\begin{array}{ll}
		A_{1} & B_{1}
		\end{array}]	\right) \right) =0\,, 	\label{cs2}\\
		&\operatorname{Tr}\left(G_{2}F_{22}\right)=0\,, 	\label{cs3}\\
		&\operatorname{Tr}\left(G_{3}\left(\mathscr{P}(F)-M\right)\right)=0; \label{cs1}
		\end{align}			
		\item[(\romannumeral4)] Stationarity conditions:
		\begin{equation}\label{sc}
		\frac{\partial\mathscr{L}_{3}}{\partial M} =0\,,\quad  \frac{\partial\mathscr{L}_{3}}{\partial F} =0 \,,
		\end{equation}
	\end{itemize} 
	then  $\left(\tilde {F}, \tilde{M}\right) $ and $\left( \tilde{G}_{1},  \tilde{G}_{2}, \tilde{G}_{3}\right)$ are primal and dual   optimal points, respectively.  
	
	Choose $\tilde{F}=H^{*}$ and $\tilde{M}=P^{*}$. Substituting  $\tilde{F}=H^{*}$ into the left-hand side of \eqref{dp1-2} results in the left-hand side of~\eqref{dp1-2} to be equal to $0$, 
	from which it follows that the constraint~\eqref{dp1-2} is active, i.e., the constraint~\eqref{cs2} is trivially satisfied without any requirement for  $\tilde G_{1}$. The constraint~\eqref{dp1-3} is not active at the candidate point  $(\tilde{F}, \tilde{M})=\left( H^{*}, P^{*}\right) $ due to $\tilde{F}_{22}=H^{*}_{22}\succ 0$, which dictates   $\tilde{G}_{2}=0$. Moreover, substituting  $(\tilde{F}, \tilde{M})=\left( H^{*}, P^{*}\right) $  into the left-hand side of the inequality constraint~\eqref{dp1-1} results in the left-hand side of~\eqref{dp1-1} to be equal to $0$.	That is, the constraint~\eqref{dp1-1} is active and \eqref{cs1} is trivially satisfied without imposing any restriction on  $\tilde G_{3}$.	
	
	It is shown below that the stationary conditions \eqref{sc} can be satisfied at the  point $(\tilde{F}, \tilde{M})$  by appropriate selection of $\tilde{G}_{1}$ and $\tilde{G}_{3}$. To this end, the matrix variable $F$ is reformulated
	as 
	\begin{equation*}
	F=E_{1} F_{11} E_{1}^{\top}+E_{1} F_{12} E_{2}^{\top}+E_{2} F_{12}^{\top} E_{1}^{\top}+E_{2} F_{22} E_{2}^{\top}
	\end{equation*} 
	by using the matrices $E_{1}= [\begin{array}{ll}I_{n} & 0\end{array}]^{\top} \in \mathcal{R}^{(n+m) \times n}$ and  $E_{2}=[\begin{array}{ll}0 & I_{m}\end{array}]^{\top} \in \mathcal{R}^{(n+m) \times m} $. Then, the stationarity conditions \eqref{sc}, can be equivalently described by  \eqref{sc1}--\eqref{sc4}.
	\begin{align}
	\frac{\partial\mathscr{L}_{3}}{\partial M} =&-\left( X_{0}+\frac{\alpha}{1-\alpha} \Sigma\right) ^{\top}+G_{3}^{\top}=0\,, \label{sc1}\\
	\frac{\partial\mathscr{L}_{3}}{\partial F_{11}}  =&-G_{3}+E_{1}^{\top} G_{1} E_{1}-\alpha [\begin{array}{ll}
	A & B
	\end{array}] G_{1}[\begin{array}{ll}
	A & B
	\end{array}]^{\top}\notag\\
	&-\alpha\sigma [\begin{array}{ll}
	A_{1} & B_{1}
	\end{array}]G_{1}[\begin{array}{ll}
	A_{1} & B_{1}
	\end{array}]^{\top}=0\,, \label{sc2}\\
	\frac{\partial\mathscr{L}_{3}}{\partial F_{12}}  =&2\alpha \left( [\begin{array}{ll}
	A & B
	\end{array}]G_{1}[\begin{array}{ll}
	A & B
	\end{array}]^{\top}\right. \notag\\
	& \left.+\sigma [\begin{array}{ll}
	A_{1} & B_{1}
	\end{array}]G_{1}[\begin{array}{ll}
	A_{1} & B_{1}
	\end{array}]^{\top}\right) \tilde{F}_{12} \tilde{F}_{22}^{-1}\notag\\
	&+2G_{3} \tilde{F}_{12} \tilde{F}_{22}^{-1} +2E_{1}^{\top} G_{1} E_{2}=0\,, \label{sc3}\\
	\frac{\partial\mathscr{L}_{3}}{\partial F_{22}}  =&-\tilde{F}_{22}^{-1}F_{12}^{\top}\left(
	G_{3} 
	+\alpha [\begin{array}{ll}
	A & B
	\end{array}] G_{1}[\begin{array}{ll}
	A & B
	\end{array}]^{\top} \right. \notag\\
	&\left.+\alpha\sigma [\begin{array}{ll}
	A_{1} & B_{1}
	\end{array}]G_{1}[\begin{array}{ll}
	A_{1} & B_{1}
	\end{array}]^{\top} \right) \tilde{F}_{12}\tilde{F}_{22}^{-1}\notag\\
	&+E_{2}^{\top} G_{1} E_{2}=0\,. \label{sc4}
	\end{align}
	Choose 
	\begin{equation}\label{g3}
	\tilde{G}_{3}= X_{0}+\frac{\alpha}{1-\alpha} \Sigma \succ 0\,, 
	\end{equation}
	which satisfies the stationary condition \eqref{sc1}.
	
	From \eqref{sc2} and \eqref{sc3}, the matrix $G_{1}$ should be subject to the following constraint:
	\begin{equation}\label{eq25}
	E_{1}^{\top} G_{1} E_{1} \tilde{F}_{12} \tilde{F}_{22}^{-1}+E_{1}^{\top} G_{1} E_{2}=0\,. 
	\end{equation}
	Furthermore, from~\eqref{sc2} and \eqref{sc4}, $G_{1}$ should also be subject to the following constraint:
	\begin{equation}\label{eq26}
	-\tilde{F}_{12}^{\top} E_{1}^{\top} G_{1} E_{1} \tilde{F}_{12}+\tilde{F}_{22} E_{2}^{\top} G_{1} E_{2} \tilde{F}_{22}=0\,.
	\end{equation}
	By partitioning the Lagrange multiplier $G_{1}$ as $G_{1}=\left[\begin{array}{ll}G^{1} _{11} & G^{1}_{12} \\ \left( G^{1}_{12}\right) ^{\top} & G^{1}_{22}\end{array}\right]$, the equalities  \eqref{eq25} and \eqref{eq26} are represented as
	\begin{align}
	&G^{1} _{11} \tilde{F}_{12} \tilde{F}_{22}^{-1}+G^{1} _{12}=0\,, \label{eq27}\\
	&-\tilde{F}_{12}^{\top} G^{1} _{11} \tilde{F}_{12}+\tilde{F}_{22}G^{1} _{22}\tilde{F}_{22}=0\,.\label{eq28}
	\end{align}
	By substituting \eqref{g3},   \eqref{eq27} and \eqref{eq28} into \eqref{sc2}, we have
	\begin{align}
	&\tilde G_{3}+\alpha\left( A-B \tilde{F}_{22}^{-1} \tilde{F}_{12}^{\top}\right)G^{1}_{11}\left(A-B \tilde{F}_{22}^{-1} \tilde{F}_{12}^{\top}\right)^{\top} \notag \\
	&+\alpha\sigma\left(A_{1}-B_{1} \tilde{F}_{22}^{-1} \tilde{F}_{12}^{\top}\right)G^{1}_{11}\left(A_{1}-B_{1} \tilde{F}_{22}^{-1} \tilde{F}_{12}^{\top}\right)^{\top} \notag\\
	=&G^{1}_{11} \,.\label{eq29}
	\end{align}	
		As we choose to let $\tilde{F}=H^{*}$, based on the optimal control gain representation given by \eqref{lstar}, $A-B \tilde{F}_{22}^{-1} \tilde{F}_{12}^{\top}$ and $A_{1}-B_{1} \tilde{F}_{22}^{-1} \tilde{F}_{12}^{\top}$ in \eqref{eq29} become    $\mathcal{A}\triangleq A+BL^{*}$  and $\mathcal{A}_{1} \triangleq A_{1}+B_{1}L^{*}$,  respectively. Since $L^{*} \in \mathcal{L}$ leads to $\rho\left(C_{L^{*}}\right)<1$, then when  
	\begin{equation*}
	\alpha>1-\frac{\lambda_{\min }\left(\tilde G_{3} \right)}{\lambda_{\operatorname{max} }\left(\mathcal{A}G^{1}_{11}\mathcal{A}^{\top}+\sigma\mathcal{A}_{1}G^{1}_{11}\mathcal{A}_{1}^{\top}
		\right)}\,,	
	\end{equation*}  
	  one can obtain from  Lemma~\ref{lem1} and Lemma~\ref{lem2}   that  for  given $\tilde G_{3}= X_{0}+\frac{\alpha}{1-\alpha} \Sigma\succ0$, there exists a unique $\tilde G^{1}_{11}\succ 0$  such that \eqref{eq29} holds. 
	Accordingly,  construct $\tilde{G}_{1}$  as 
	\begin{equation*}
	\begin{aligned}
	\tilde{G}_{1} & =\left[\begin{array}{cc}
	\tilde{G}^{1}_{11} & -\tilde{G}^{1}_{11} \tilde{F}_{12} \tilde{F}_{22}^{-1} \\
	-\tilde{F}_{22}^{-1} \tilde{F}_{12}^{\top} \tilde{G}^{1}_{11} & \tilde{F}_{22}^{-1} \tilde{F}_{12}^{\top} \tilde{G}^{1}_{11} \tilde{F}_{12} \tilde{F}_{22}^{-1}
	\end{array}\right] \\
	& =\left[\begin{array}{c}
	I_{n} \\
	-\tilde{F}_{22}^{-1} \tilde{F}_{12}^{\top}
	\end{array}\right] \tilde{G}^{1}_{11}\left[\begin{array}{c}
	I_{n} \\
	-\tilde{F}_{22}^{-1} \tilde{F}_{12}^{\top}
	\end{array}\right] ^{\top}\succeq  0\,.
	\end{aligned}
	\end{equation*}	 
	%In summary, there  exist  $\tilde{F}=H^{*}$,  $\tilde{M}=P^{*}$, $	\tilde{G}_{1} =\left[\begin{array}{c}	I_{n} \\	-\tilde{F}_{22}^{-1} \tilde{F}_{12}^{\top}	\end{array}\right] \tilde{G}^{1}_{11}\left[\begin{array}{c}	I_{n} \\	-\tilde{F}_{22}^{-1} \tilde{F}_{12}^{\top}	\end{array}\right] ^{\top}$, $\tilde{G}_{2} =0$, and $\tilde G_{3}= X_{0}+\frac{\alpha}{1-\alpha} \Sigma_0$ that satisfy the KKT conditions of Problem~\ref{p3}, thus 
	In summary, it can be concluded that  $\left( H^{*},P^{*}\right)$ and 
	\begin{equation*}
	\left(\left[\begin{array}{c}
	I_{n} \\
	-\tilde{F}_{22}^{-1} \tilde{F}_{12}^{\top}
	\end{array}\right] \tilde{G}^{1}_{11}\left[\begin{array}{c}
	I_{n} \\
	-\tilde{F}_{22}^{-1} \tilde{F}_{12}^{\top}
	\end{array}\right] ^{\top}\,, 0\,,X_{0}+\frac{\alpha}{1-\alpha} \Sigma_0 \right)
	\end{equation*}
	are primal and dual optimal points, respectively.
	Therefore,  $F^{*}=H^{*}$ and $M^{*}=P^{*}$. Moreover, according to the expressions of $H^{*}$ and $P^{*}$, $\left( H^{*}, P^{*}\right) $ is  independent of $ X_{0}+\frac{\alpha}{1-\alpha} \Sigma$.\qed
\end{proof}
\begin{Remark}\rm	
	The primal problem in this paper is nonconvex, which makes it nontrivial to prove that strong duality holds. Instead of proving that strong duality holds, this paper proceeds directly from the dual problem and finds the relationship between the optimal point of the dual problem and the parameters in Q-learning by using the convexity of the dual problem and the conclusion that the KKT  conditions in convex optimization are sufficient  for optimality \cite{boyd2004convex}.
\end{Remark}

Theorem~\ref{th2} shows that  the term  $ X_{0}+\frac{\alpha}{1-\alpha} \Sigma $ only affects the optimal value of the objective function~\eqref{dp1} in Problem~\ref{p3}, and that optimal point   $\left(F^{*}, M^{*}\right) $  is independent of  $ X_{0}+\frac{\alpha}{1-\alpha} \Sigma $.
Then, without loss of generality, the objective function~\eqref{dp1} in  Problem~\ref{p3} can be expressed as
\begin{equation*}
\underset{F,M }{\operatorname{Maximize}}   \operatorname{Tr}\left(M\right)\,.
\end{equation*}
Using the Schur complement property, the constraint~\eqref{dp1-1} in Problem~\ref{p3} can be rewritten  in the following  linear matrix inequality  (LMI)  form:
\begin{equation}\label{p4-2}
\left[\begin{array}{cc}
F_{11}-M & F_{12} \\
F_{12}^{\top} & F_{22}
\end{array}\right] \succeq 0\,.
\end{equation}	
Moreover,  the constraint~\eqref{dp1-3} in Problem~\ref{p3} is implicitly contained in~\eqref{p4-2}. 
Introducing the auxiliary matrix $\bar{I}\triangleq[\begin{array}{ll}
I_{n+m}& I_{n+m}
\end{array}]$ and  $\overline{D}\triangleq  [\begin{array}{cc}
A & B
\end{array}]\oplus [\begin{array}{ll}
A_{1} & B_{1}
\end{array}]$, and then using  the technique    of the matrix direct sum  and  the Schur complement property once more, the constraint~\eqref{dp1-2} in Problem~\ref{p3} can also be rewritten  in the  LMI  form  described by
\begin{equation*}
\left[\begin{array}{cc}
\alpha\bar{I} \bar{D}^{\top}\overline{F_{11}^{2}(\sigma)}
	\overline{D}\bar{I}^{\top}-F+W 
& \alpha^{\frac{1}{2}}\bar{I}\bar{D}^{\top} \overline{F_{12}^{2}(\sigma)} \\
\alpha^{\frac{1}{2}}  \overline{F_{12}^{2}(\sigma)}^{\top}\bar{D}\bar{I}^{\top} & \overline{F_{22}}^{2} \end{array}\right] \succeq 0,
\end{equation*}
where   $\overline{F_{ij}^{2}(\sigma)}\triangleq F_{ij}\oplus \sigma F_{ij}\,,i,j=1,2$, and $\overline{F_{22}}^{2}\triangleq F_{22}\oplus  F_{22}$.
Consequently, Problem~\ref{p3} is equivalently described by a standard SDP problem in Problem~\ref{p4}.
\begin{Problem}\rm \label{p4} SDP with optimization variables $ F \in \mathcal{S}^{n+m}$ and $M \in \mathcal{S}^{n}$.
	\begin{equation*}
	\begin{aligned}
	&	\underset{F,M }{\operatorname{Maximize}}  \ \operatorname{Tr}\left(M\right)\,, \\
	&	\text { subject to } \ 	 \left[\begin{array}{cc}
	F_{11}-M & F_{12} \\
	F_{12}^{\top} & F_{22}
	\end{array}\right] \succeq 0\,,	\\
	&\left[\begin{array}{cc}
	{\alpha\bar{I} \overline{D}^{\top}\overline{F_{11}^{2}(\sigma)}  
		\overline{D}\bar{I}^{\top}-F+W} 
	& \alpha^{\frac{1}{2}}\bar{I}\overline{D} ^{\top}  \overline{F_{12}^{2}(\sigma)} \\
	\alpha^{\frac{1}{2}}  \overline{F_{12}^{2}(\sigma)}^{\top}\overline{D}\bar{I}^{\top} & \overline{F_{22}}^{2} 	
	\end{array}\right] \succeq 0\,.
	\end{aligned}
	\end{equation*}
\end{Problem} 
 \begin{Remark}\rm	
	 Unlike the deterministic LQ problem studied in \cite{farjadnasab2022model}, this paper investigates the stochastic LQR problem for stochastic systems subject to multiplicative and additive noises. Consequently, the third constraint~(\ref{dp1-2}) in the dual problem differs from its counterpart in \cite{farjadnasab2022model} not only by adding a discount factor coefficient $\alpha$ but also by including a term related to multiplicative noise: $\alpha \sigma [\begin{array}{ll}
	 A_{1} & B_{1}
	 \end{array}]^{\top}\mathscr{P}(F)[\begin{array}{ll}
	 A_{1} & B_{1}
	 \end{array}] $. The presence of this term makes it impossible to rewrite~(\ref{dp1-2}) into LMI form solely by exploiting the Schur complement property. To address this, auxiliary matrices $\bar{I}$ and  $\overline{D}$ are introduced, and the technique of direct sum  is employed to achieve the desired transformation. This technique is also utilized in the subsequent reformulation of Problem~5. 
\end{Remark}

With the above preparation, we
can now present the  model-based SDP algorithm.
\begin{algorithm}[htb]
	\caption{Model-Based SDP Algorithm}
	\label{alg:0}
	\begin{algorithmic}[1]
	\State   Solve  SDP in Problem~\ref{p4}  for $\left( F^{*}, M^{*}\right)$.	
	\State  Obtain   the solution $P^{*} $ of the DGARE~\eqref{are1} as  $P^{*}=M^{*}$, and the optimal state feedback gain $ L^{*}=-\left( F^{*}_{22}\right) ^{-1} \left( F^{*}_{12}\right) ^{\top}$.
	\end{algorithmic}
\end{algorithm}	

\section{Model-free  implementation of model-based SDP algorithm } 
In this section, based on Monte-Carlo method,
a model-free implementation
of the SDP controller design is given,   and a completely data-driven optimal stochastic LQR control is presented by using the technique of the matrix direct sum.

Assume  that the   triplets $\left(	x_{k}^{(i)},u_{k}^{(i)},x_{k+1}^{(i)} \right)\,,i =  1,\cdots,N$, $k = 0,\cdots,K-1$ are available, where $N$ denotes the number of  sampling experiments implemented and  $K$ denotes the length of the sampling time  horizon in the data collection phase. 
%with $	x_{k+1}^{(i)}=A x_{k}^{(i)}+B u_{k}^{(i)}+\left( A_{1} x_{k}^{(i)}+B_{1} u_{k}^{(i)}\right) v_{k}^{(i)}+w_{k}^{(i)}$ for $i = 1,\cdots, l$ and $k = 0,\cdots, N-1$.  
%Based on the Monte-Carlo method, we approximate the mathematical expectation using the %numerical average of $N$ sample paths. 
Let $Z^{(1)}, Z^{(2)},\cdots,Z^{(N)}$ be a set of samples from $Z \triangleq [\begin{array}{llll}
z_{0} & z_{1} & \cdots & z_{K-1} \end{array}] $ and   $Y^{(1)}, Y^{(2)},\cdots,Y^{(N)}$ be a set of samples from $	Y \triangleq [\begin{array}{llll}
x_{1} & x_{2} & \cdots & x_{K} 
\end{array}]$.  According to \eqref{eq1}, we have  
\begin{equation}\label{y}
Y=	[\begin{array}{ll}
A & B
\end{array}] Z+[\begin{array}{ll}
A_{1} & B_{1}
\end{array}] Z\mathcal{V}+\mathcal{W},
\end{equation}  
where $\mathcal{V}\triangleq  v_{0}\oplus v_{1}\oplus \cdots\oplus v_{K-1}$, $\mathcal{W}\triangleq[\begin{array}{llll}
w_{0} & w_{1} & \cdots & w_{K-1}\end{array}]$. Noting \eqref{y} and using  Assumption~\ref{A1},  one has  
\begin{equation}\label{mk}
\begin{aligned}
&\mathbb{E}\left[ Y^{\top}\mathscr{P}(F) Y \right] \\
=&\mathbb{E}\left[Z^{\top}[\begin{array}{ll}
A & B
\end{array}]^{\top} \mathscr{P}(F)[\begin{array}{ll}
A & B
\end{array}]Z\right]\\
&+\sigma \mathbb{E}\left[Z^{\top}[\begin{array}{ll}
A_{1} & B_{1}
\end{array}]^{\top}\mathscr{P}(F)[\begin{array}{ll}
A_{1} & B_{1}
\end{array}] Z\right]\\
&+\mathbb{E}\left[\mathcal{W}^{\top}\mathscr{P}(F)\mathcal{W}\right]. 
\end{aligned}
\end{equation} 	
%Using the data obtained from the $N$ sampling experiments to define  $Z^{(i)} %\triangleq\left[\begin{array}{llll}	x_{0}^{(i)} & x_{1}^{(i)} & \cdots & x_{K-1}^{(i)} \\
%	u_{0}^{(i)} & u_{1}^{(i)} & \cdots & u_{K-1}^{(i)}
%\end{array}\right]$ and $Y^{(i)} \triangleq \left[\begin{array}{llll}
%	x_{1}^{(i)} & x_{2}^{(i)} & \cdots & x_{K}^{(i)} 
%\end{array}\right]$ for $i =  1,\cdots,N$, respectively.  
We now employ \eqref{mk} to provide the  model-free implementation of Algorithm~1. Assuming that $Z^{(i)}$ has full row  rank  for each $i =  1,\cdots,N$, then by the properties of matrix congruence,  if the constraint~\eqref{dp1-2} in Problem~\ref{p3} is left-multiplied by $ Z^{(i)\top}$ and right-multiplied by $Z^{(i)}$, we  obtain the equivalent constraint of \eqref{dp1-2} as follows: 
\begin{equation*}
\begin{aligned}
&\alpha Z^{(i)\top} [\begin{array}{ll}
A & B
\end{array}]^{\top}\mathscr{P}(F)[\begin{array}{ll}
A & B
\end{array}]Z^{(i)}\\  
+&\alpha\sigma  Z^{(i)\top}[\begin{array}{ll}
A_{1} & B_{1}
\end{array}]^{\top}\mathscr{P}(F)[\begin{array}{ll}
A_{1} & B_{1}
\end{array}] Z^{(i)}\\
-& Z^{(i)\top}\left( F-W\right) Z^{(i)}\succeq 0\,.
\end{aligned}
\end{equation*}	
Using the properties of positive semidefinite  matrices and $N>0$, one obtains 
\begin{equation}\label{qiwang}
\begin{aligned}
&\frac{1}{N}\sum_{i=1}^{N}\left[\alpha Z^{(i)\top}[\begin{array}{ll}
A & B
\end{array}]^{\top}\mathscr{P}(F)[\begin{array}{ll}
A & B
\end{array}] Z^{(i)}\right] \\
+&	\frac{1}{N}\sum_{i=1}^{N}\left[\alpha\sigma  Z^{(i)\top}[\begin{array}{ll}
A_{1} & B_{1}
\end{array}]^{\top}\mathscr{P}(F)[\begin{array}{ll}
A_{1} & B_{1}
\end{array}] Z^{(i)}  \right]\\ 
-&\frac{1}{N}\sum_{i=1}^{N}\left[ Z^{(i)\top}\left( F-W\right) Z^{(i)}\right] \succeq 0\,.
\end{aligned}
\end{equation}
Based on the Monte-Carlo method, we can approximate the mathematical expectation using the numerical average of $N$ sample paths. 
According to \eqref{mk},  one has 
\begin{equation}\label{p5-33}
\begin{aligned}
&\frac{1}{N}\sum_{i=1}^{N}\left[\alpha Z^{(i)\top}[\begin{array}{ll}
A & B
\end{array}]^{\top}\mathscr{P}(F)[\begin{array}{ll}
A & B
\end{array}] Z^{(i)}\right]\\
+&\frac{1}{N}\sum_{i=1}^{N}\left[ \alpha\sigma  Z^{(i)\top}[\begin{array}{ll}
A_{1} & B_{1}
\end{array}]^{\top}\mathscr{P}(F)[\begin{array}{ll}
A_{1} & B_{1}
\end{array}] Z^{(i)}\right]\\
\approx&\frac{1}{N}\sum_{i=1}^{N}\left[\alpha Y^{(i)\top}\mathscr{P}(F)Y^{(i)}\right]-\alpha \mathbb{E}\left[\mathcal{W}^{\top}\mathscr{P}(F)\mathcal{W}\right]\,.
%-r\operatorname{Tr} \left( \mathscr{P}(F)\Sigma\right) I_N 
\end{aligned}
\end{equation}
By substituting \eqref{p5-33}  into  \eqref{qiwang} and noting that 
\begin{equation*}
\mathbb{E}\left[\mathcal{W}^{\top}\mathscr{P}(F)\mathcal{W}\right]\succeq 0\,,
\end{equation*}
  we have
\begin{equation}\label{p5-3}
\frac{1}{N}\sum_{i=1}^{N}\left[\alpha Y^{(i)\top}\mathscr{P}(F)Y^{(i)}- Z^{(i)\top}\left( F-W\right) Z^{(i)}\right]
\succeq 0\,.
\end{equation}
Since $F_{22}\succ 0$, using the Schur complement and the technique of  matrix direct sum, the constraint~\eqref{p5-3} can be equivalently expressed as 
\begin{equation}\label{p5-66}
\begin{aligned}
&
\left[ \begin{array}{cc} \alpha\overline{I_{K}}^{N} \overline{Y}^{N\top}\overline{F_{11}}^{N}  
	\overline{Y}^{N}\overline{I_{K}}^{N\top}-\overline{Z}^{N}(F)
&\alpha^{\frac{1}{2}}\overline{I_{K}}^{N}\overline{Y}^{N\top} \overline{F_{12}}^{N} \\
\alpha^{\frac{1}{2}} \overline{F_{12}}^{N\top}\overline{Y}^{N}\overline{I_{K}}^{N\top} & \overline{F_{22}}^{N} 	
\end{array}\right] \\
&\succeq 0
\,,	
\end{aligned}
\end{equation}
where 
\begin{equation*}
\begin{aligned}
&\overline{F_{ij}}^{N}\triangleq \underbrace{F_{ij}\oplus F_{ij}\oplus\cdots \oplus F_{ij}}_{N}\,,i,j=1,2,\\
&\overline{I_{K}}^{N}\triangleq\left[\underbrace{\begin{array}{llll}
	I_{K}& I_{K} & \cdots & I_{K}
	\end{array}}_{N}\right],\\
&\overline{Y}^{N}\triangleq  Y^{(1)}\oplus Y^{(2)}\oplus\cdots \oplus Y^{(N)},\\
&\overline{Z(F)}^{N}\triangleq \sum_{i=1}^{N}\left[  Z^{(i)\top}\left( F-W\right) Z^{(i)}\right].
\end{aligned}
\end{equation*}

Based on the above analysis, Theorem~\ref{th3} introduces an SDP in Problem~\ref{p5} for solving the stochastic LQR problem without any information about the system dynamics and without requiring  an initial stabilizing  control policy. 
	\begin{Problem}\rm	\label{p5}
	SDP with optimization variables  $F \in \mathcal{S}^{n+m}$  and  $M \in \mathcal{S}^{n}$.
	\begin{align*}
	&	\underset{F,M }{\operatorname{Maximize}}  \ \operatorname{Tr}\left(M\right)\,, \notag\\
	&	\text { subject to } \	 \left[\begin{array}{cc}
	F_{11}-M & F_{12} \\
	F_{12}^{\top} & F_{22}
	\end{array}\right] \succeq 0\,,\notag\\		
	&
	\left[ \begin{array}{cc} \alpha\overline{I_{K}}^{N} \overline{Y}^{N\top}\overline{F_{11}}^{N}  
		\overline{Y}^{N}\overline{I_{K}}^{N\top}-\overline{Z}^{N}(F)
	&\alpha^{\frac{1}{2}}\overline{I_{K}}^{N}\overline{Y}^{N\top} \overline{F_{12}}^{N} \\
	\alpha^{\frac{1}{2}} \overline{F_{12}}^{N\top}\overline{Y}^{N}\overline{I_{K}}^{N\top} & \overline{F_{22}}^{N} 	
	\end{array}\right] \\
	&\succeq 0\,.\notag
	\end{align*} 
\end{Problem}
\begin{theorem}\rm \label{th3}
	Suppose that $Z^{(i)}$ has full row  rank  for each $i =  1,\cdots,N$.   Then the optimal state feedback gain associated with the stochastic LQR problem is approximated by
	\begin{equation}\label{lo}
	\widehat{L^{*}}=-\left(\hat F_{22}\right)^{-1}\left(\hat F_{12}\right)^{\top} \,,
	\end{equation}
	where  $\left( \hat F,\hat M\right)$  is the optimal point of  Problem~\ref{p5} with $\hat{F} = \begin{bmatrix} \hat{F}_{11} & \hat{F}_{12} \\ (\hat{F}_{12})^\T & \hat{F}_{22} \end{bmatrix}$.
\end{theorem}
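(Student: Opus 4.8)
The plan is to establish that Problem~\ref{p5} is the data-driven, Monte-Carlo counterpart of the model-based SDP Problem~\ref{p4} (equivalently, of the dual Problem~\ref{p3}), and then to invoke Theorem~\ref{th2} together with the gain formula \eqref{lstar} to identify the matrix recovered by \eqref{lo}. The two problems already share the objective $\operatorname{Tr}(M)$ and the first LMI constraint verbatim, so the entire argument reduces to showing that the second LMI of Problem~\ref{p5} is a sample-based rendering of the constraint \eqref{dp1-2} of Problem~\ref{p3}.

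First I would justify the congruence reduction. For each fixed $i$, pre- and post-multiplying \eqref{dp1-2} by $Z^{(i)\top}$ and $Z^{(i)}$ preserves positive semidefiniteness in the forward direction; conversely, since $Z^{(i)}$ has full row rank its columns span $\mathcal{R}^{n+m}$, so the linear map $x \mapsto Z^{(i)}x$ is onto $\mathcal{R}^{n+m}$ and the congruence is in fact an equivalence. Thus each per-sample inequality is equivalent to \eqref{dp1-2}, and because a finite sum of positive semidefinite matrices is positive semidefinite, averaging the $N$ per-sample inequalities produces \eqref{qiwang}.

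Next I would pass to a model-free expression. Using the sample identity \eqref{mk}, the terms that carry the unknown matrices $[A\ B]$ and $[A_{1}\ B_{1}]$ are replaced by the measured data $Y^{(i)}$ via the Monte-Carlo approximation \eqref{p5-33}; since the discarded residual $\alpha\mathbb{E}[\mathcal{W}^{\top}\mathscr{P}(F)\mathcal{W}]\succeq 0$ only strengthens the left-hand side, the inequality \eqref{p5-3} follows. Because $F_{22}\succ 0$ is forced by the first LMI, a Schur-complement step combined with the direct-sum constructions $\overline{F_{ij}}^{N}$, $\overline{I_{K}}^{N}$ and $\overline{Y}^{N}$ rewrites \eqref{p5-3} exactly as the second LMI \eqref{p5-66} of Problem~\ref{p5}. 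This identifies Problem~\ref{p5} as the data-driven surrogate of Problem~\ref{p4}.

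Finally I would close by identification. By the equivalence of Problem~\ref{p4} with Problem~\ref{p3} and by Theorem~\ref{th2}, the optimal point of the model-based problem satisfies $F^{*}=H^{*}$, $M^{*}=P^{*}$; as the sample averages approach their expectations the feasible set of Problem~\ref{p5} approaches that of Problem~\ref{p4}, so $(\hat F,\hat M)\approx(F^{*},M^{*})=(H^{*},P^{*})$. Substituting into the gain formula \eqref{lstar} of Lemma~\ref{lem3} yields $\widehat{L^{*}}=-(\hat F_{22})^{-1}(\hat F_{12})^{\top}\approx -(H^{*}_{22})^{-1}(H^{*}_{12})^{\top}=L^{*}$, which is the assertion. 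I expect the main obstacle to be the transition from the exact model-based constraint to its sample surrogate: one must argue carefully that the full-row-rank hypothesis turns the congruence step into a genuine equivalence rather than a one-sided implication, and that discarding the noise term $\alpha\mathbb{E}[\mathcal{W}^{\top}\mathscr{P}(F)\mathcal{W}]$ together with the Monte-Carlo sampling error perturbs the feasible set by an amount that is controlled by $N$, so that $\widehat{L^{*}}$ is an asymptotically exact approximation of $L^{*}$.
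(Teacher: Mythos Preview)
Your proposal is correct and follows essentially the same route as the paper: congruence by the full-row-rank data matrices to obtain \eqref{qiwang}, the Monte-Carlo replacement \eqref{p5-33} (dropping the positive semidefinite noise term) to reach \eqref{p5-3}, the Schur-complement/direct-sum rewrite to \eqref{p5-66}, and finally invoking Theorem~\ref{th2} and the gain formula \eqref{lstar}. The paper's own proof is in fact just a short summary of these same steps already developed in the text preceding the theorem, so your more explicit articulation of why full row rank makes the congruence an equivalence, and your honest flagging of the approximation issues, are entirely in line with (and slightly more careful than) the paper's argument.
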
 
\begin{proof}
	It is already shown that according to the  Monte-Carlo method and  the  properties
	of matrix congruence and the  positive semidefinite  matrices,   the
		constraint~\eqref{dp1-2} in Problem~\ref{p3} can be approximated by the constraint~\eqref{p5-3}, and the constraint~\eqref{p5-3} is equivalent to 
		the constraint~\eqref{p5-66}. Based on Theorem~\ref{th2}, the  exact parameter $H^*$ of the optimal Q-function  is obtained by solving Problem~\ref{p3}, which is approximately equivalent to Problem~\ref{p5}, and then Q-learning offers  an estimate of the optimal state feedback gain,  as shown in~\eqref{lo}.\qed
\end{proof}
It should be noted that, to ensure the rank condition in Theorem~\ref{th3}, the length of the sampling time  horizon  $K$ must be more than or equal to the sum of the dimensions of the input and the state. The rank condition in Theorem~\ref{th3} is aimed at allowing  the system to oscillate sufficiently during the learning process to produce enough data to  fully learn the system information. To this end, in practice, a small exploration noise consisting of a Gaussian white noise or a sum of sinusoids must be added to the control signal.
\begin{Remark}\rm	
	The model-free implementation process in this paper differs from that in \cite{farjadnasab2022model}. Due to the presence of multiplicative and additive noise, the next-time-step state $x_{k+1}$ in this paper becomes a random variable, which is fundamentally different from the state-determined representation in \cite{farjadnasab2022model}. Therefore, the model-free implementation method from \cite{farjadnasab2022model} is no longer applicable to this paper. To address this challenge, we introduces a set of $N$ samples for the next-time-step state vector $	Y $ and the augmented system state vector $Z $ for  $k = 0,\cdots,K-1$. Based on the Monte-Carlo method, we approximate the mathematical expectation using the sample mean to derive an approximate equivalent description of the constraints. This enables the estimation of the optimal feedback gain.
\end{Remark}
In view of the results given above, Algorithm~\ref{alg:2} allows one to solve the stochastic LQR problem on
the basis of the available data.
\begin{algorithm}[htb]
	\caption{Model-Free SDP Algorithm}
	\label{alg:2}
	\begin{algorithmic}[1]
	\State  Choose the initial state $x_{0}$ from a  distribution with  the constant mean vector   and positive definite covariance matrix.
	\State   Apply  $u_k = u_{k}^{(i)} +d_k$  to system~\eqref{eq1} for $K$ time steps, where $d_k$ is the exploration noise with zero mean     and  covariance matrix $\Sigma_{d}\succ 0$, and collect the input and state data to construct the data matrices $Z^{(i)}$ and $Y^{(i)}$  for each $i =  1,\cdots,N$ until the rank condition in Theorem~\ref{th3} is satisfied.		
	\State   Solve SDP in Problem~\ref{p5}   for $\left( \hat F,\hat M\right)$.	
	\State  Obtain the  estimate of  $P^{*}$ given by $\widehat{P^{*}}=\hat M$, and the estimate of the optimal state feedback gain $ \widehat{L^{*}} $ associated with stochastic LQR problem  described by~\eqref{lo}.
	\end{algorithmic}
\end{algorithm}

It is worth noting that model-free iterative algorithms, such as policy iteration, value iteration, and Q-learning,  can also solve the stochastic LQR problem and   guarantee the  convergence to the optimal policy as the number of iterations goes to infinity.    
However, the proposed off-policy algorithm is non-iterative and computes the estimate of the optimal policy in a single step by utilizing only the sampled data with time-domain length $K$ ($K\ge n+m$). It neither requires an initial stabilizing controller to be provided, nor does it require hyper-parameter tuning. It is characterized by high sampling efficiency and, as a known feature of LMIs, it is inherently robust to model uncertainty.  Furthermore,  since the SDP in Problem~\ref{p5} does not contain terms associated with multiplicative and additive noises,  the implementation of the proposed model-free  SDP algorithm   does not require the multiplicative and additive noises to be measurable.

	\begin{Remark}\rm	
		The multiplicative noise is set to be a scalar $v_k$ just for simplicity of presentation, and the results of  this paper can be generalized to the following system:
		\begin{equation*}
		x_{k+1}=A x_{k}+B u_{k}+\sum_{l=1}^{M} \left( A_{l} x_{k}+B_{l} u_{k}\right) v^{l}_{k}+w_{k},
		\end{equation*}
		where  $\vec{v }_{k}=\left(v^{l}_{k},v^{2}_{k},\cdots, v^{M}_{k}\right) \in \mathcal{R}^{M}$ is  system multi-dimensional multiplicative noise. 	
		The following assumptions are made regarding the aforementioned system:		
		The noises $v^{l}_{k},v^{2}_{k},\cdots, v^{M}_{k}$  are  i.i.d. (not necessarily Gaussian distributed) satisfying  $\mathbb{E}(v^{l}_{k}) =0$ and $\mathbb{E}\left[v^{l}_{k}v^{l\top}_{k}\right]  =\delta_{ij }\sigma$, where $\sigma > 0$ and $\delta_{ij}$ is a Kronecker function. Moreover, $\left\{w_{k}, k \in \mathcal{Z}_{+} \right\}$ and $\left\{\vec{v}_{k}, k \in \mathcal{Z}_{+} \right\}$ are mutually independent. 		
		If multiplicative noise is multidimensional, e.g., $\vec{v }_{k}=\left(v^{l}_{k},v^{2}_{k},\cdots, v^{M}_{k}\right)$,  the augmented system
		becomes
		\begin{equation*}
		z_{k+1}=\overline{A_{L}} z_{k}+\sum_{l=1}^{M}\overline{A^l_{L}} z_{k}v^l_{k}+\overline{L} w_{k}\,,
		\end{equation*}
		where  $\overline{A^l_{L}} \triangleq \left[\begin{array}{cc}A_{l} & B_{l} \\ L A_{l} & L B_{l}\end{array}\right] \in \mathcal{R}^{(n+m) \times(n+m)} $, the remaining symbols are the same as when  $v_k$ a scalar. Accordingly, Equation~(8) satisfied by $S$ in this paper becomes
		\begin{equation*}
		\begin{aligned}
		&\alpha  \left( \overline{A_{L}} S \overline{A_{L}}^{\top}+\sigma\sum_{l=1}^{M}\overline{A^l_{L}} S \overline{A^l_{L}} ^{\top}\right) +\overline{L} X_{0} \overline{L}^{\top}\\
		+&\frac{\alpha}{1-\alpha} \overline{L}\Sigma \overline{L}^{\top}=S.
		\end{aligned}		
		\end{equation*}
		The corresponding results in  this paper can be proved in the same way
		for the multidimensional case. For example: Problem 3 needs only one modification,
		i.e., Equation~(12) is changed to be  
		\begin{equation*}
		\begin{aligned}
		&\alpha  [\begin{array}{ll}
		A & B
		\end{array}]^{\top}\mathscr{P}(F)[\begin{array}{ll}
		A & B
		\end{array}] +\alpha \sigma\sum_{l=1}^{M} [\begin{array}{ll}
		A_{l} & B_{l}
		\end{array}]^{\top}\mathscr{P}(F)[\begin{array}{ll}
		A_{l} & B_{l}
		\end{array}]\\
		-&F+W  \succeq 0,
		\end{aligned}
		\end{equation*}
		and the second constraint in Problem 4 is modified to
		\begin{equation*}
		\mathscr{T}(F)\triangleq\left[\begin{array}{cc}
		\mathscr{T}(F)_{11} 
		& \mathscr{T}(F)_{12}  \\
		\mathscr{T}(F)_{12} ^{\top} & \mathscr{T}(F)_{22}  	
		\end{array}\right] \succeq 0,
		\end{equation*}
		where 
		\begin{equation*}
		\begin{aligned}
		&\mathscr{T}(F)_{11}\triangleq\alpha\overline{I_{n+m}^{M+1}} \overline{D^{M+1}}^{\top}\overline{F_{11}^{M+1}(\sigma)} 
		\overline{D^{M+1}}\overline{I_{n+m}^{M+1}}^{\top}-F+W,\\
		&\mathscr{T}(F)_{12}\triangleq \alpha^{\frac{1}{2}}\overline{I_{n+m}^{M+1}}\overline{D^{M+1}} ^{\top}  \overline{F_{12}^{M+1}(\sigma)}, \\
		&\mathscr{T}(F)_{22}\triangleq \overline{F_{22}^{M+1}}
		\end{aligned}
		\end{equation*}
		with 
		\begin{equation*}
		\begin{aligned}
		&\overline{I_{n+m}^{M+1}}\triangleq\left[\underbrace{\begin{array}{llll}
			I_{n+m}& I_{n+m} & \cdots & I_{n+m}
			\end{array}}_{M+1}\right],\\
		&\overline{D^{M+1}}\triangleq  [\begin{array}{cc}
		A & B
		\end{array}]\oplus_{l=1}^{M} [\begin{array}{ll}
		A_{l} & B_{l}
		\end{array}],\\
		&\overline{F_{ij}^{M+1}(\sigma)}\triangleq F_{ij}\oplus_{l=1}^{M} \left( \sigma F_{ij}\right)\,,i,j=1,2,\\
		&\overline{F_{22}^{M+1}}\triangleq \oplus_{l=1}^{M+1}F_{22}.
		\end{aligned}
		\end{equation*}
	\end{Remark}

 We introduce five key optimization problems (Problems 1 to 5) in this paper. To improve readability, we visually illustrate their relationships in Fig.~1.
 \begin{figure}[htb]
	\begin{center}
		\includegraphics[width=\columnwidth]{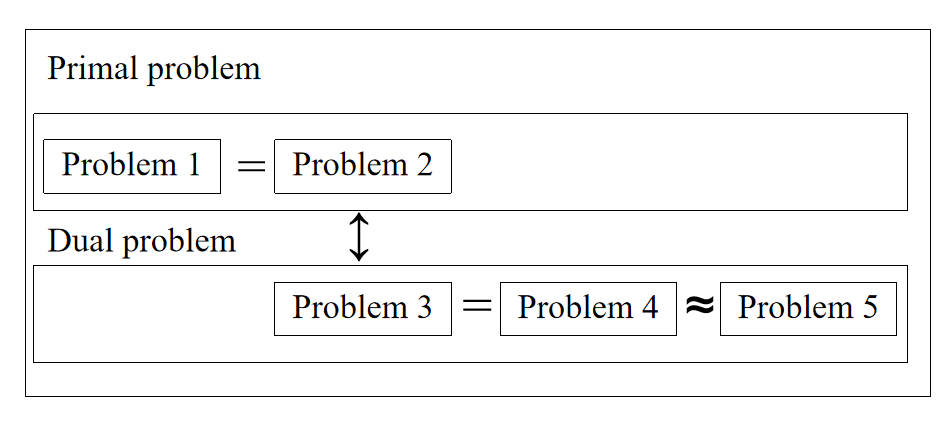}
		\caption{Diagram for relations among the results.}  % width is 8.4 cm.
		\label{fig0}                                 % Size the figures 
	\end{center}                                 % accordingly.
\end{figure}

In the aforementioned model-free implementation, replacing the mathematical expectation with sample averaging introduces error. In fact, due to the presence of stochastic noise, error is unavoidable. Beyond Monte Carlo approximation error, sources of error include but are not limited to
the impact of probing noise in data collection, the sample estimation error and numerical errors in SDP solution. Taking into account the presence of errors, it is
necessary to study the robustness of the model-free  algorithm to errors  in the next section.
\section{Robustness analysis}
In this section, the robustness of the model-free SDP algorithm to errors in the learning
process is studied. The estimation error  is
shown to be bounded and the estimation of the optimal control policy is proven to
be admissible under mild conditions.	
	\begin{theorem}\rm
	%[Estimation Error Bound of Optimal Control Gain]\rm
	Assume  that the data matrices \( Z^{(i)} \in \mathbb{R}^{(n+m) \times K} \) (for \( i=1,2,\dots,N \)) constructed from system interaction data are full-row rank, with \( \sigma_{\text{min}}(Z^{(i)}) \geq \gamma > 0 \) and \( K \geq n+m \), and that there exists a constant $\underline{\sigma}$ such that \( \sigma_{\text{min}}(I - \alpha C_{\widehat{L^{*}}}) \geq \underline{\sigma} > 0 \),  where
	\begin{equation*}
	C_{\widehat{L^{*}}} = (A + B\widehat{L^{*}}) \otimes (A + B\widehat{L^{*}}) + \sigma (A_1 + B_1\widehat{L^{*}}) \otimes (A_1 + B_1\widehat{L^{*}}).
	\end{equation*}		
	Then, for any confidence level $1-\beta$ (\( \beta \in (0,0.1) \)), the estimation error satisfies
	\begin{equation*}
	\mathbb{P}\left( \|\widehat{L}^* - L^*\|_F \leq \widetilde{O}\left( \frac{(n+m)(1 + \|F^*\|_F)}{\sqrt{NK} \cdot \sigma_d \cdot \underline{\sigma}} \right) \right) \geq 1 - \delta,
	\end{equation*}
	where  \( \sigma_d > 0 \) is the minimum intensity of the exploration noise.
\end{theorem}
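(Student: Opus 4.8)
The plan is to track how the sampling error in the single perturbed constraint of Problem~\ref{p5} propagates, first to the optimal matrices $(\hat F,\hat M)$, and finally to the gain through $\widehat{L^*}=-(\hat F_{22})^{-1}(\hat F_{12})^{\top}$. The starting observation is that Problem~\ref{p5} is exactly Problem~\ref{p3} (equivalently Problem~\ref{p4}) with the population constraint~\eqref{dp1-2} replaced by its Monte-Carlo surrogate~\eqref{p5-3}. Writing the sample LMI as the population LMI plus a stochastic residual $\mathcal{E}_N(F)$, which collects both the deviation of $\frac1N\sum_i \alpha Y^{(i)\top}\mathscr{P}(F)Y^{(i)}$ from its expectation and the discarded nonnegative term $\alpha\,\mathbb{E}[\mathcal{W}^{\top}\mathscr{P}(F)\mathcal{W}]$, reduces the theorem to two independent estimates: a concentration bound on $\|\mathcal{E}_N\|$, and a sensitivity bound converting $\|\mathcal{E}_N\|$ into $\|\widehat{L^*}-L^*\|_F$.

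For the concentration step I would treat the $N$ trajectories as independent and apply a matrix Bernstein (or Hoeffding) inequality to the zero-mean sum defining $\mathcal{E}_N$. Because each trajectory carries $K$ excited time steps and the exploration noise $d_k$ with covariance $\Sigma_d\succ0$ guarantees $\sigmin{\Sigma_d}\ge\sigma_d>0$, the effective sample size is $NK$ and the per-sample signal power is bounded below in terms of $\sigma_d$; this yields, with probability at least $1-\delta$, a bound of the form $\|\mathcal{E}_N\|\le \widetilde{O}\big((n+m)(1+\|F^*\|_F)/(\sqrt{NK}\,\sigma_d)\big)$. The factor $1+\|F^*\|_F$ enters because $\mathscr{P}(F)$ is quadratic in the decision matrix and the averaged summands scale with its norm near the optimizer; the dimensional factor $(n+m)$ and the logarithmic terms absorbed into $\widetilde{O}(\cdot)$ come from passing to the Frobenius norm and from the confidence level $\delta$. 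The full-row-rank hypothesis $\sigmin{Z^{(i)}}\ge\gamma>0$ is what makes the congruence step leading to~\eqref{p5-3} loss-free and keeps the constant uniform across samples.

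The heart of the argument is the sensitivity step, carried out through the dual variables of Theorem~\ref{th2}. At the population optimum the stationarity condition~\eqref{eq29} forces the dual block $G^1_{11}$ to solve the Lyapunov-type equation $\tilde G_3+\alpha\mathcal{A}G^1_{11}\mathcal{A}^{\top}+\alpha\sigma\mathcal{A}_1 G^1_{11}\mathcal{A}_1^{\top}=G^1_{11}$, that is, after vectorization, $(I-\alpha C_{L^*})\operatorname{vec}(G^1_{11})=\operatorname{vec}(\tilde G_3)$. Perturbing the active constraint by $\mathcal{E}_N$ and linearizing the KKT system shows that the induced shift $\hat F-F^*$ is, to first order, the solution of the \emph{same} Lyapunov operator driven by $\mathcal{E}_N$; hence $\|\hat F-F^*\|_F\le \|(I-\alpha C_{\widehat{L^*}})^{-1}\|\,\|\mathcal{E}_N\|\le \underline{\sigma}^{-1}\|\mathcal{E}_N\|$, which is precisely where the closed-loop margin $\sigmin{I-\alpha C_{\widehat{L^*}}}\ge\underline{\sigma}>0$ enters the denominator. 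This is the step that unifies error boundedness with stability: the hypothesis $\underline{\sigma}>0$ simultaneously certifies that $\widehat{L^*}$ is admissible (so $\rho(C_{\widehat{L^*}})<1$ and the operator is invertible) and that the error transfer is finite.

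Finally I would propagate $\hat F-F^*$ to the gain. Since $F^*_{22}=H^*_{22}=R+\alpha B^{\top}P^*B+\alpha\sigma B_1^{\top}P^*B_1\succeq R\succ0$ is bounded below, the map $F\mapsto -(F_{22})^{-1}F_{12}^{\top}$ is locally Lipschitz, and the standard first-order perturbation bound for a matrix inverse gives $\|\widehat{L^*}-L^*\|_F\le c\,\|\hat F-F^*\|_F$ for a constant depending on $\lambda_{\min}(R)$. Chaining the three estimates and absorbing logarithmic factors into $\widetilde{O}(\cdot)$ produces the claimed bound. The main obstacle is the sensitivity step: making the dual-variable transfer rigorous requires a strict-complementarity/non-degeneracy argument at $(F^*,M^*)$ so that an implicit-function or variational sensitivity theorem applies, together with the verification that the linearization remainder is of higher order in $\|\mathcal{E}_N\|$ and therefore does not corrupt the leading $\underline{\sigma}^{-1}$ scaling.
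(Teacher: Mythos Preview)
Your proposal is correct and follows essentially the same four-step architecture as the paper: a concentration bound on the Monte-Carlo residual (matrix sub-exponential/Bernstein tail giving the $(n+m)/\sqrt{NK}$ rate with the $\sigma_d,\gamma$ factors), a linearization that identifies the error-transfer operator as $I-\alpha C_{\widehat{L^*}}$, inversion of that operator via the hypothesis $\sigma_{\min}(I-\alpha C_{\widehat{L^*}})\ge\underline{\sigma}$, and a final Lipschitz step for $F\mapsto -(F_{22})^{-1}F_{12}^{\top}$.

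The one place where you and the paper differ in emphasis is the sensitivity step. The paper does \emph{not} go through the KKT system or the dual multiplier equation~\eqref{eq29}; instead it defines the population constraint function $G(F)=\mathbb{E}[\alpha Y^{\top}\mathscr{P}(F)Y-Z^{\top}(F-W)Z]$, notes $G(F^*)=0$, and performs a first-order Taylor expansion of $G$ in $F$, asserting directly that $\nabla_F G(F^*)=I-\alpha C_{\widehat{L^*}}$ so that $(I-\alpha C_{\widehat{L^*}})\Delta F=\Delta_N$. Your route through linearizing the full KKT conditions and invoking an implicit-function/sensitivity argument is more standard from the optimization side and is more explicit about the non-degeneracy needed to control the remainder, but it is also heavier; the paper's derivation is shorter (and, frankly, more informal---it does not justify the gradient identity in detail). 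Both routes land on the same linear transfer equation, so the final bound is identical.
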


\begin{proof}
	The proof proceeds in four key steps, leveraging convex optimization duality, matrix perturbation theory, and probability tail bounds.
	
	Step 1: Probability Bound of Sample Average Deviation 
	
	Define the sample average deviation as
	\begin{equation*}
	\begin{aligned}
	\Delta_N = &\frac{1}{N} \sum_{i=1}^N \left[\alpha Y^{(i)\T} \mathscr{P}(\hat{F}) Y^{(i)} - Z^{(i)\T}(\hat{F} - W) Z^{(i)}\right] \\
	&- \mathbb{E}\left[ \alpha (Y^{(i)})^\top \mathscr{P}(\hat{F}) Y^{(i)} - (Z^{(i)})^\top(\hat{F}-W)Z^{(i)} \right].
	\end{aligned}
	\end{equation*}
	By Assumption 1, \( Y^{(i)} \) and \( Z^{(i)} \) are composed of independent and identically distributed random variables, so the elements of \( \Delta_N \) are sub-exponential random variables.	
	By the matrix-valued sub-exponential tail bound \cite{vershynin2018high} and Union Bound, we have
	\begin{equation*}
	\mathbb{P}\left( \|\Delta_N\|_F \leq \epsilon_1 \right) \geq 1 - \frac{\beta}{2},
	\end{equation*}
	where \( \epsilon_1 = c_1 \cdot \frac{(n+m)\ln(2/\beta)}{\sqrt{NK} \cdot \sigma_d \cdot \gamma} \) for some constant \( c_1 > 0 \) dependent on the noise covariance matrices \( \Sigma \) and \( \sigma \). The term \( 1/\sqrt{NK} \) arises from the law of large numbers, \( (n+m) \) corrects for the dimension of \( Z^{(i)} \), and \( \sigma_d \cdot \gamma \) reflects the persistent excitation of the data.
	
	By the expectation decomposition \eqref{mk}, ignoring the non-negative additive noise expectation term \( \mathbb{E}[\mathcal{W}^\top \mathscr{P}(F) \mathcal{W}] \), the sample average approximates the expectation, ensuring the covariance of \( \Delta_N \) is bounded by \( \sigma_d^2 K \), i.e., $\text{Tr}\left( \text{Cov}(\Delta_N) \right) \leq \sigma_d^2 K$. (For the definition of the covariance matrix of a random variable with matrix values, please refer to \cite{vershynin2018high}.)
	
	Step 2: Linear Relationship Between \( \Delta F \) and \( \Delta_N \)
	
	Let \( \Delta F = \hat{F} - F^* \) denote the deviation between the model-free optimal dual variable \( \hat{F} \) and the true optimal dual variable \( F^* = H^* \). Define the expectation term as 
	\begin{equation*} 
	G(F) = \mathbb{E}\left[ \alpha (Y^{(i)})^\top \mathscr{P}(F) Y^{(i)} - (Z^{(i)})^\top(F-W)Z^{(i)} \right].
	\end{equation*} 
	Since \( F^* \) is the optimal solution of Problem 4, it satisfies the stationary condition $G(F^*) = 0	$.		
	For small \( \Delta F \) (valid when \( N,K \) are sufficiently large), we perform a first-order Taylor expansion of the expectation term around \( F^* \):
	\begin{equation*}
	G(F)_{F=\hat{F}} \approx \nabla_F G(F)_{F=F^*} \cdot \Delta F.
	\end{equation*}		
	By the definition of \( \Delta_N \) and the law of large numbers, substituting the Taylor expansion gives
	\begin{equation*}
	\Delta_N \approx \nabla_F G(F)_{F=F^*} \cdot \Delta F.
	\end{equation*}
	Split $G(F)$ into quadratic and linear terms. Taking  the partial derivatives of the quadratic terms in blocks and combining them with the derivatives of the linear terms, the gradient \( \nabla_F G(F) \) is exactly the linear combination of the covariance evolution terms, leading to:
	\begin{equation*}
	\nabla_F G(F)_{F=F^*} = I - \alpha C_{\widehat{L^{*}}}.
	\end{equation*}		
	Thus, the linear equation holds:
	\begin{equation*}
	(I - \alpha C_{\widehat{L^{*}}}) \cdot \Delta F = \Delta_N.
	\end{equation*}		
	Step 3: Bound of Dual Variable Deviation \( \|\Delta F\|_F \)
	
	Taking the Frobenius norm of both sides of the linear equation and using the matrix norm inequality \( \|AX\|_F \geq \sigma_{\text{min}}(A) \cdot \|X\|_F \), we have:
	\begin{equation*}
	\sigma_{\text{min}}(I - \alpha C_{\widehat{L^{*}}}) \cdot \|\Delta F\|_F \leq \|\Delta_N\|_F.
	\end{equation*}		
	By the stability condition \( \sigma_{\text{min}}(I - \alpha C_{\widehat{L^{*}}}) \geq \underline{\sigma} > 0 \), rearranging gives:
	\begin{equation*}
	\|\Delta F\|_F \leq \frac{\|\Delta_N\|_F}{\underline{\sigma}}.
	\end{equation*}		
	Combining with the probability bound of \( \Delta_N \) from Step~1, the Union Bound yields
	\begin{equation*}
	\mathbb{P}\left( \|\Delta F\|_F \leq \frac{\epsilon_1}{\underline{\sigma}} \right) \geq 1 - \beta.
	\end{equation*}		
	Step 4: Bound of Gain Estimation Error \( \|\Delta L\|_F \)
	
	Let \( \Delta L = \widehat{L}^* - L^* \), with \( L^* = -(F_{22}^*)^{-1}(F_{12}^*)^\top \) and \( \widehat{L}^* = -(\hat{F}_{22})^{-1}(\hat{F}_{12})^\top \). Let \( \Delta F_{12} = \hat{F}_{12} - F_{12}^* \) and \( \Delta F_{22} = \hat{F}_{22} - F_{22}^* \). Expanding \( \Delta L \) using the matrix inverse perturbation formula gives
	\begin{equation*}
	\Delta L = -(\hat{F}_{22})^{-1}\Delta F_{12}^\top + (\hat{F}_{22})^{-1}\Delta F_{22} \cdot (F_{22}^*)^{-1}(F_{12}^*)^\top.
	\end{equation*}		
	Since \( F_{22}^* \succ 0 \), for sufficiently small \( \|\Delta F_{22}\|_F \), \( \hat{F}_{22} \) is invertible with \( \|(\hat{F}_{22})^{-1}\|_2 \leq \frac{2}{\sigma_{\text{min}}(F_{22}^*)} \). Using the triangle inequality \( \|A+B\|_F \leq \|A\|_F + \|B\|_F \) and operator norm inequality \( \|AB\|_F \leq \|A\|_2 \|B\|_F \), we obtain
	\begin{equation}\label{eb}
	\|\Delta L\|_F \leq c \cdot \|\Delta F\|_F,
	\end{equation}
	where \( c = \frac{2(1 + 2\|L^*\|_2/\sigma_{\text{min}}(F_{22}^*))}{\sigma_{\text{min}}(F_{22}^*)} \) is a constant independent of \( N,K \).
	
	Substituting the bound of \( \|\Delta F\|_F \) from Step 3 and suppressing the logarithmic factor \( \ln(2/\beta) \) with \( \widetilde{O} \), we get
	\begin{equation*}
	\|\widehat{L}^* - L^*\|_F \leq \widetilde{O}\left( \frac{(n+m)(1 + \|F^*\|_F)}{\sqrt{NK} \cdot \sigma_d \cdot \underline{\sigma}} \right).
	\end{equation*}		
	This completes the proof.
\end{proof}

\begin{Remark} \rm
The error bound shows that the estimation error decays at a rate of $\mathcal{O}(1/\sqrt{NK})$, which is consistent with the statistical efficiency of data-driven control algorithms \cite{lai2023model}.  Moreover, based on the bounds for the estimation error given in Theorem~4, the sample complexity of Algorithm~2 can be analyzed, i.e., the total sample count $s = NK$ satisfies $s = \tilde{O}\left( \frac{(n+m)^2}{\varepsilon^2} \right)$. Unlike \cite{lai2023model}, the proof method in this paper transforms the error into a linear system transfer problem via dual variables. It unifies error boundedness and system stability analysis through the closed-loop matrix minimum singular value, explicitly quantifying the relationship between key factors (such as sample complexity and data sustained excitation) and the error. This approach eliminates reliance on Q-function modeling.
\end{Remark}
\begin{theorem}\rm	\label{th5}
	 Assume  that the   error between the estimated dual optimal solution \( \hat F \) and the true dual optimal solution \( F^* = H^* \)  satisfies $\normF{\hat{F} - F^*} \leq \varepsilon$, where \( \varepsilon> 0 \) is a sufficiently small constant depending on \( N \) and \( K \) (i.e., \( \varepsilon \to 0 \) as \( N, K \to \infty \)), then the control policy $ u_k = \widehat{L^{*}}x_k$ is admissible.
\end{theorem}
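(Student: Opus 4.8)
The plan is to establish admissibility via a perturbation argument that leverages the strict spectral inequality enjoyed by the true optimal gain together with the continuous dependence of the closed-loop spectral radius on the dual variable $\hat{F}$. As a starting point I would record that the exact optimum is admissible: by Theorem~\ref{th2} we have $F^{*}=H^{*}$, and by Lemma~\ref{lem3} the associated gain $L^{*}=-(F_{22}^{*})^{-1}(F_{12}^{*})^{\top}=-(H_{22}^{*})^{-1}(H_{12}^{*})^{\top}$ is precisely the optimal stochastic-LQR gain, which lies in $\mathcal{L}$ under Assumption~\ref{A2}. The equivalence (i)$\Leftrightarrow$(ii) in Lemma~\ref{lem1} then identifies admissibility of $L^{*}$ with $\rho(C_{L^{*}})<1$, and this strict inequality is the margin I intend to exploit.

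Next, I would quantify how closely $\widehat{L^{*}}$ tracks $L^{*}$. Since $F_{22}^{*}=H_{22}^{*}\succ0$ and positive definiteness is an open condition, for $\varepsilon$ small enough the block $\hat{F}_{22}$ stays invertible, so $\widehat{L^{*}}=-(\hat{F}_{22})^{-1}(\hat{F}_{12})^{\top}$ is well defined. The matrix-inverse perturbation estimate already derived in the preceding robustness theorem, recorded in \eqref{eb}, gives $\normF{\widehat{L^{*}}-L^{*}}\le c\,\normF{\hat{F}-F^{*}}\le c\,\varepsilon$, so that $\widehat{L^{*}}\to L^{*}$ as $\varepsilon\to0$.

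It then remains to propagate this convergence through the spectral radius. The entries of $C_{L}=(A+BL)\otimes(A+BL)+\sigma(A_{1}+B_{1}L)\otimes(A_{1}+B_{1}L)$ are polynomials in the entries of $L$, hence $L\mapsto C_{L}$ is continuous, and composing with the continuity of $\rho(\cdot)$ in the matrix entries shows that $\hat{F}\mapsto\rho(C_{\widehat{L^{*}}})$ is continuous at $F^{*}$. Because $\rho(C_{L^{*}})<1$ strictly, there exists $\delta>0$ such that $\normF{\hat{F}-F^{*}}\le\delta$ forces $\rho(C_{\widehat{L^{*}}})<1$; choosing $\varepsilon\le\delta$, which is guaranteed for $N,K$ large since $\varepsilon\to0$, yields $\rho(C_{\widehat{L^{*}}})<1$, and Lemma~\ref{lem1} (ii)$\Rightarrow$(i) lets me conclude that $u_{k}=\widehat{L^{*}}x_{k}$ is admissible.

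The main obstacle is turning the qualitative phrase ``for $\varepsilon$ sufficiently small'' into a verifiable threshold, since bare continuity of the spectral radius gives no explicit bound. I expect the cleanest rigorous route to bypass eigenvalue perturbation and instead use the Lyapunov characterization Lemma~\ref{lem1} (iii): fixing some $Y\succ0$, the generalized Lyapunov equation at $L^{*}$ has a unique solution $G^{*}\succ0$; replacing $L^{*}$ by $\widehat{L^{*}}$ perturbs the Lyapunov operator by an amount that is $O(\varepsilon)$ in operator norm, and a strict-contraction estimate, using that the operator at $L^{*}$ contracts with a margin determined by $Y$ and $G^{*}$, shows the perturbed operator still admits a positive definite solution once $\varepsilon$ is below an explicit threshold. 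Carrying out this Lyapunov estimate is where I would concentrate the technical effort.
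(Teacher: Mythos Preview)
Your proposal is correct and follows essentially the same route as the paper: establish $\rho(C_{L^{*}})<1$ from $L^{*}\in\mathcal{L}$, invoke the bound~\eqref{eb} to control $\normF{\widehat{L^{*}}-L^{*}}$ by $c\varepsilon$, and then use continuity of $L\mapsto\rho(C_{L})$ to conclude $\rho(C_{\widehat{L^{*}}})<1$, hence admissibility via Lemma~\ref{lem1}. The paper is content with the qualitative continuity step and does not pursue the explicit Lyapunov-operator perturbation you sketch in your final paragraph, so that refinement would go beyond what the authors provide.
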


\begin{proof} 
For the true optimal gain $L^*$, since $L^{*} \in \mathcal{L}$, \cite[Lemma~1]{lai2023model} implies that
 $\rho(C_{L^*}) < 1$.

For the estimated gain $\widehat{L^{*}}$, according to~\eqref{eb}, the estimation error $\normF{\hat{F} - F^*} \leq \varepsilon$  implies
\begin{equation*}
\normF{\widehat{L^{*}} - L^*} \leq c\varepsilon .
\end{equation*} 
That is, there exists a $\delta = c\varepsilon$ such that $\widehat{L^{*}} \in \mathcal{B}_{\delta}\left(L^{*}\right)\triangleq\left\lbrace L \in \mathcal{R}^{m \times n}\Big| \|L-L^{*}\|_{F} \leqslant \delta\right\rbrace$, where $c$ is a positive constant given in Theorem~4. From the continuity of matrix operations and the continuity of the spectral radius in \cite{boyd2004convex}, we obtain $\rho(C_{\widehat{L^{*}}}) < 1$, for all $\widehat{L^{*}} \in \mathcal{B}_{\delta}\left(L^{*}\right)$.
 According to \cite[Lemma~1]{lai2023model}  and Definition~2, the control policy $u_k = \widehat{L^{*}}x_k$ is admissible. This completes the proof. \qed
\end{proof}
\begin{Remark}\rm
	Theorem 5 aligns with Theorem 3 in \cite{lai2023model}  in core logic: both rely on the continuity of stability with respect to control gain and the boundedness of estimation error. The key distinction lies in the gain estimation source—$\widehat{L^{*}}$ in this paper is derived from non-iterative SDP optimization rather than iterative RL, but the admissibility proof leverages the same equivalence between spectral radius condition and MSS, and the small estimation error assumption. This confirms that Algorithm 2  not only achieves high estimation accuracy but also preserves closed-loop stability, ensuring practical applicability.
\end{Remark}

\section{Simulation}
%In this section, we test the proposed model-free SDP algorithm by applying to the stochastic LQR controller design for a pulse-width modulated inverter system. 
Consider the pulse-width modulated inverter system used in the  uninterruptible power supply studied in \cite{jung1996discrete},  and assume that
it is perturbed by both multiplicative and additive noises. Then the  discrete-time dynamical system is obtained via   Euler discretization with
step size $\delta t = 0.1s$,  which can be described by system~\eqref{eq1} with  
$A=\left[\begin{array}{cc}
0.6929 & 8.6545\\
-0.0241 & 0.8603	
\end{array}\right]$, $A_{1}=\left[\begin{array}{cc}
0.01 & 0.02 \\
-0.001 & 0.05 	
\end{array}\right]$, $B=\left[\begin{array}{cc}
0.1290  &	0.0267   	
\end{array}\right]^{\top}$, and $B_1=\left[\begin{array}{cc}
-0.02  &	0.005   	
\end{array}\right]^{\top}$.
Assume that $v_{k}$ and  $w_{k}$ follow the standard Gaussian distributions. 
Select $Q=I$, $R=0.00001$ in the cost   functional~\eqref{j1}. Choose $\alpha=0.5$. Then,   the optimal feedback control gain
$L^{*} $ can be computed by the model-based SDP Algorithm~1     as
$  L^{*}=\left[\begin{array}{ll}
-4.8599 & -64.0491
\end{array}\right]$,
and   the solution $P^{*} $ of DGARE~\eqref{are1}   can be obtained by $P^{*}=M^{*} $ in Theorem~2 as
$  P^{*}=\left[\begin{array}{ll}
1.0215   & 0.1206\\
0.1206  &  1.6917
\end{array}\right]$,  with the residual $\|e(P^{*}) \|_{F} =5.8025\times10^{-4}$, where $e(P) \triangleq P-\mathscr{R}(P) $.

The model-free Algorithm~\ref{alg:2} is employed to     estimate  the optimal   feedback gain
in the steps that follow. Choose the initial state $x_{0}$ from a Gaussian distribution with   mean vector  $\mu_{0}=[1, 2]^{\top}$ and  covariance matrix  $\Sigma_{0}= 5I_2 \succ 0 $. Let  $K  = 9$.  Select $5$ values of $N$, that is, 10, 20, 30,
40 and  80. For each value of
$N$, Algorithm~\ref{alg:2} is run for 10  times, in which $w_k$   and $v_k$
are generated randomly for each time.  Then,   CVX is used to solve Problem~\ref{p5} each time, and the estimates of $L^{*}$ and $P^{*}$ can be obtained, which are denoted as $\widehat{L^{*}}$ and $\widehat{P^{*}}$, respectively. Fig.~\ref{fig1} shows the residual $\|e(\widehat{P^{*}}) \|_{F}$
of Algorithm~\ref{alg:2} with different values
of $N$. One can observe   that the average  of  $\|e(\widehat{P^{*}}) \|_{F}$ decreases   as the value of $N$ increases. When $N=20$, the  average of $\|e(\widehat{P^{*}}) \|_{F}$  can achieve an order of  $10^{-4}$.  One can observe from the
simulation results that, although Algorithm~\ref{alg:2} does not know the information about the system dynamics, the estimate of the optimal control gain obtained is very close to the optimal control gain obtained from the model-based SDP Algorithm~1, and the model-free Algorithm~\ref{alg:2} performs very effectively.
%Fig. 2 shows log(Pˆj − P ∗F ) and log(Kˆj − K∗F ) with T = 5000. 
\begin{figure}
	\begin{center}
		\includegraphics[height=2in,width=3.5in]{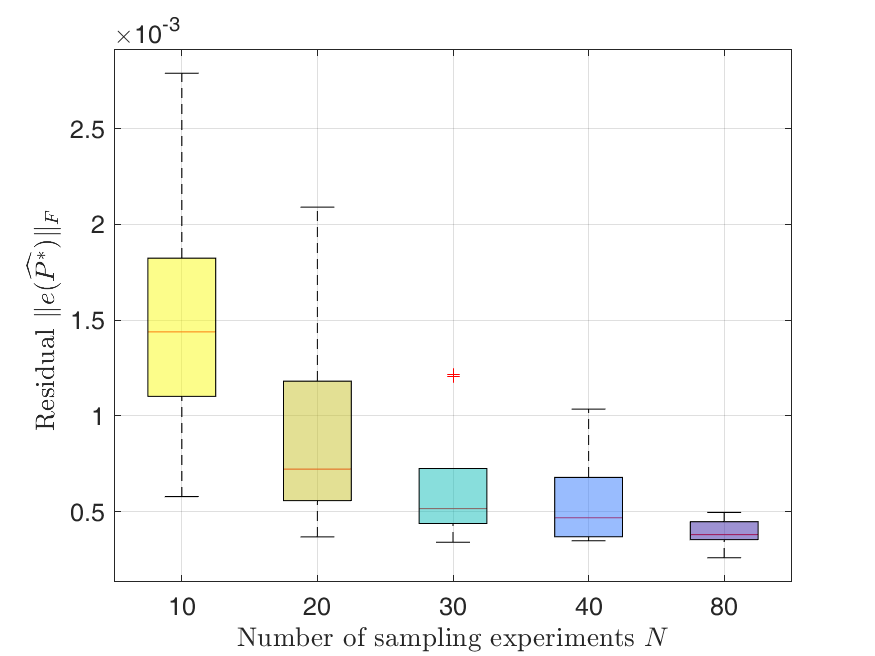}    % The printed column  
		\caption{Residuals $\|e(\widehat{P^{*}}) \|_{F}$ of Algorithm~2 with different values of $N$.}  % width is 8.4 cm.
		\label{fig1}                                 % Size the figures 
	\end{center}                                 % accordingly.
\end{figure}
\begin{figure}
	\begin{center}
		\includegraphics[height=2in,width=3.5in]{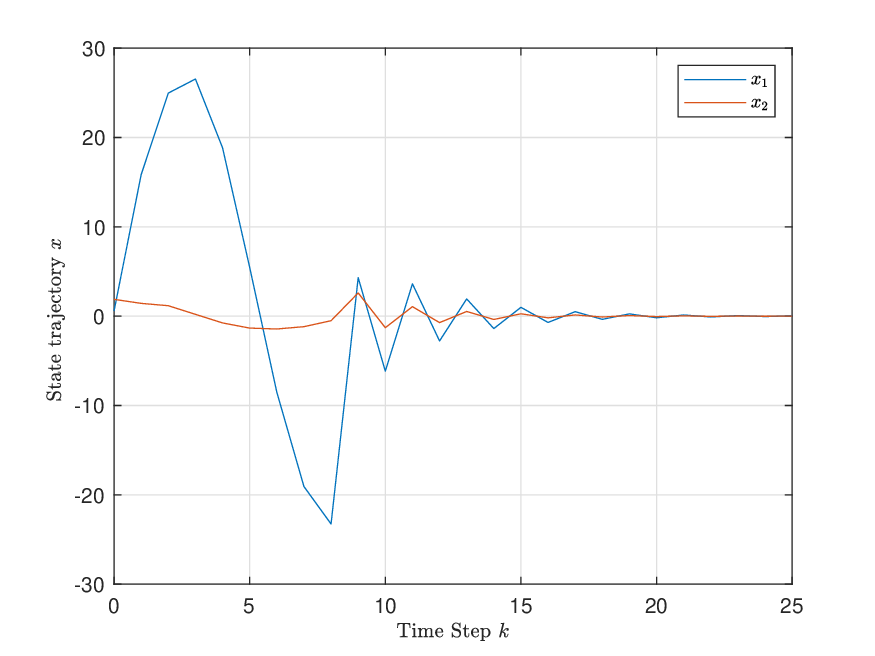}    % The printed column  
		\caption{A numerical average of state trajectories in data collection phase and after learning.}  % width is 8.4 cm.
		\label{fig3}                                 % Size the figures 
	\end{center}                                 % accordingly.
\end{figure}
Fig.~\ref{fig3} shows the state trajectories derived from the numerical average of the data collected from $N=80$ sampling experiments during the model-free design approach, where the trajectories in time domains $\left\lbrace 0,\cdots,8 \right\rbrace $ are the state trajectories gathered during the data collection phase of the reinforcement learning process, and starting at moment $9$, the system begins to apply the model-free optimal controller to generate optimal state trajectories. The results confirm that the proposed model-free algorithm is able to  efficiently
learn the optimal control gain and stabilize the system. 
%\subsubsection{2. Additional Comparative Simulations}
\section{Conclusions}
In this paper, a novel model-free SDP method for solving
the stochastic LQR problem  has been proposed. The controller design scheme is based on  an SDP  with LMI
constraints,  which is sample-efficient and non-iterative. Moreover, it neither requires an initial stabilizing controller to be provided, nor does it require multiplicative and additive noises to be measurable. The robustness of the model-free SDP method to errors  has been investigated. In addition, the proposed algorithm  has been verified on a pulse-width modulated inverter system. The results confirm that the proposed algorithm is able to  efficiently
learn the optimal control gains and stabilize the systems.
%\section{Acknowledgments}
\bibliographystyle{apacite} 
\bibliography{gjref.bib}           % and a bib file to 
\end{document}